\title{ Generating Sets of \\ Mathieu Groups }
\author{Thomas G. Brooks}
\date{\today}
\newtheorem{theorem}{Theorem}[section]
\newtheorem{proposition}[theorem]{Proposition}
\newtheorem{corollary}[theorem]{Corollary}
\newenvironment{definition}[1][Definition]{\begin{trivlist}
\item[\hskip \labelsep {\bfseries #1}]}{\end{trivlist}}
\newcommand{\Z}{\mathbb{Z}}
\newcommand{\F}{\mathbb{F}}
\begin{document}

\maketitle

\begin{abstract}
    Julius Whiston\cite{WHISTON} calculated the maximum size of an irredundant generating set for $S_n$ and $A_n$ by examination of maximal subgroups.
    Using analogous considerations, we will compute upper bounds to this value for the first two Mathieu groups, $M_{11}$ and $M_{12}$.
    Computational results gave explicit irredundant generating sets of $M_{11}$ and $M_{12}$ of size 5 and 6, respectively.
    Together these give the full results that the maximum size of an irredundant generating set for $M_{11}$ is 5 and for $M_{12}$ it is 6.

\end{abstract}

\section{Introduction}

It is natural to try to extend the notion of the dimension of a vector space into group theory.
This gives rise to a number of possible generalizations which all give the same answer in linear algebra but in group theory can give differing results.
Here, we examine two of those possible extensions and make computations about them on the Mathieu groups $M_{11}$ and $M_{12}$.

The primary one we will be examining is $m(G)$, the maximal size of a irredundant generating set of the finite group $G$.
Whiston uses a list of isomorphism classes of maximal subgroups of $S_n$ to give a proof that $m(S_n) = n-1$. \cite{WHISTON}
This method suggests an approach towards computing $m$ for a variety of other groups by using knowledge of their maximal subgroups.
One motivation for these computations is that $m(G)$ specifically gives a bound on the run-time of an algorithm to find a random element of the group.
This algorithm reaches a uniform distribution in time given by $|G|^{O(m(g))} n^2 \log n$ where $G$ is a finite group generated by $n \times n$ matrices.\cite{WHISTON}

This write-up was the author's undergraduate senior thesis under the direction Professor R. Keith Dennis.

\subsection{Mathieu Groups}

The Mathieu groups $M_{11}$, $M_{12}$, $M_{22}$, $M_{23}$, and $M_{24}$ are sporadic simple groups.
They are also interesting for their behavior as transitive group actions.
Suppose that $G$ acts on a set $S$ denoted by $g x$ for $g \in G$ and $x \in S$.
We call the action of $G$ on $S$ \emph{transitive} if for every $x,y \in S$, there is a $g \in G$ such that $g x = y$.
Similarly, we call the action \emph{$k$-transitive} if for every pairwise distinct sequence $(x_1, \ldots, x_k)$ and pairwise distinct sequence $(y_1, \ldots, y_k)$ of  elements in $S$ there is a $g \in G$ so that $(g x_1, \ldots, g x_k) = (y_1, \ldots, y_k)$.
Furthermore, a $k$-transitive action where there is exactly one such $g$ is called \emph{sharply $k$-transitive}.

The simplest examples of $k$-transitive groups are $S_\ell$ and $A_{\ell+2}$ for $\ell \geq k$.
These are generally referred to as \emph{trivial} $k$-transitive groups.
The Mathieu groups are examples of nontrivial $k$-transitive groups with the properties of each listed in Table~\ref{tab:mathieu_properties}.
Each Mathieu group $M_{n}$ acts on a set of size $n$.
It is a consequence of the classification of finite simple groups that the Mathieu groups give the only nontrivial examples of $4$- or $5$-transitive groups. \cite{GROVE}
Since every $k$-transitive group is $j$-transitive for $j < k$, this further says that that there are no nontrivial $k$-transitive groups for $k > 5$.
As such, the Mathieu groups are rather unique.

\begin{table}
    \centering
    \begin{tabular}{c|c|c}
        Mathieu Group & Order & Transitivity \\
        \hline
        $M_{11}$ & 7,920 & sharply $4$-transitive \\
        $M_{12}$ & 95,040 & sharply $5$-transitive \\
        $M_{22}$ & 443,520 & $3$-transitive \\
        $M_{23}$ & 10,200,960 & $4$-transitive \\
        $M_{24}$ & 244,823,040 & $5$-transitive
    \end{tabular}
    \caption{Basic properties of the Mathieu groups.\cite{WIKI}}
    \label{tab:mathieu_properties}
\end{table}

The Mathieu groups may be constructed as one-point extensions.
If a group $G$ acts transitively on a set $S$ of size $n$, then a group $G' \geq G$ acting transitively on $S \cup \{\alpha\}$, for $\alpha \not\in S$, with $\mbox{Stab}_{G'}(\alpha) = G$ is called a \emph{one-point extension} of $G$.
Here we write $\mbox{Stab}_{G}(x) = \{g \in G | gx = x\}$ to mean the set of elements in $G$ which fix $x$.
We start with $M_{10} = A_6 \rtimes \Z_2$, the double cover of $A_6$, which acts 2-transitively on the 10-point projective line $P = \F_9 \cup \{\infty\}$.
$M_{11}$ is a one-point extension of $M_{10}$ and $M_{12}$ is a one-point extension of $M_{11}$.
In particular, $M_{11}$ is a subgroup of $M_{12}$ which is the stabilizer of a point.
Similarly, $M_{24}$ is a one-point extension of $M_{23}$ which is a one-point extension of $M_{22}$, and $M_{22}$ is a one-point extension of $M_{21} = PSL(3,4)$.
It is even possible to start at $M_9 = (\Z_2 \times \Z_2) \rtimes Q_8$ and extend from there, but this group is less frequently used.
Only $M_{11}$, $M_{12}$, $M_{22}$, $M_{23}$, and $M_{24}$ are typically referred to as Mathieu groups and are the only ones of these which are sporadic simple groups, with $M_{10}$ and $M_{9}$ not being simple and $M_{21}$ not being sporadic.
For a more complete description of these groups, refer to Grove, section 2.7.\cite{GROVE}

\section{Background and Definitions}

Suppose that $G$ is a finite group and $\{g_1, \ldots, g_k\}$ is any set of elements in $G$.
Then let $\left<g_1, \ldots, g_k\right>$ be the unique minimal subgroup of $G$ containing $\{g_1, \ldots, g_k\}$.

\begin{definition}
Such a set $\{g_1, \ldots, g_k\}$ is called \emph{generating} if $\left<g_1, \ldots, g_k \right> = G$.
\end{definition}
\begin{definition}
The set is called \emph{irredundant} if
\[ \left<g_1, \ldots, \widehat{g_i}, \ldots, g_k\right> \not= \left<g_1, \ldots, g_k\right> \]
for all $i$ (where $\widehat{g_i}$ denotes that the element $g_i$ is skipped).
\end{definition}
Note that if $s \subseteq G$ is irredundant, then every strict subset $s'$ of $s$ also generates a strict subgroup, i.e., $\left<s'\right> < \left<s\right>$.
Moreover, every subset of an irredundant set is also irredundant.

We are interested primarily in computing the two numbers $m(G)$ and $i(G)$, which we now define.
\begin{definition}
    Let $m(G)$ be the maximal size of an irredundant generating set of a finite group $G$.
\end{definition}
\begin{definition}
    Let $i(G)$ be the maximal size of an irredundant set (not necessarily generating) of a finite group $G$.
\end{definition}
It is immediate that $m(G) \leq i(G)$ for all groups $G$.
Of course, if $G$ is a vector space, then both of these are equal to the dimension of $G$, but for a general group $G$ the two are not necessarily equal.  
For any subgroup $H \leq G$, $i(H) \leq i(G)$ but it is not necessarily true that $m(H) \leq m(G)$.

There is always a subgroup $H \leq G$ so that $m(H) = i(G)$.
To see this, take $\{g_1, \ldots, g_k\}$ an irredundant set in $G$ with $k = i(G)$, then $H = \left< g_1, \ldots, g_k \right>$ satisfies $i(G) \leq m(H) \leq i(H) \leq i(G)$ and so $m(H) = i(G)$.
In particular, this shows that if $i(G) > m(G)$, then there is a subgroup $H \leq G$ so that $m(H) = i(G) > m(G)$.
Furthermore, if $H$ is not $G$ then $H$ is contained in a maximal subgroup $M \leq G$, giving $m(H) \leq i(M) \leq i(G) = m(H)$ and so $m(H) = i(M)$.
Hence $i(G)$ is either equal to $m(G)$ or is equal to the maximum of $i(M)$ over all maximal subgroups $M$ of $G$.
This gives us a useful proposition.
\begin{proposition}
    \label{prop:i_is_max}
    For any finite group $G$,
    \[ i(G) = \max( m(G), \max( i(M) ) ) \]
    where $\max( i(M) )$ is taken over all maximal subgroups $M \leq G$.
\end{proposition}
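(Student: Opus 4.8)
The plan is to prove the equality by verifying the two inequalities separately, since $i(G)$ is itself defined as the maximum of a set of integers. The easy direction is $i(G) \geq \max(m(G), \max(i(M)))$: I would note that $m(G) \leq i(G)$ because every irredundant generating set is in particular an irredundant set, and that $i(M) \leq i(G)$ for each maximal subgroup $M$ because any irredundant set of $M$ is still irredundant when regarded as a subset of $G$. Both facts are recorded in the discussion preceding the statement, so taking the maximum over $m(G)$ and all the $i(M)$ immediately yields this lower bound on $i(G)$, with no case analysis required.

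For the reverse inequality $i(G) \leq \max(m(G), \max(i(M)))$, the key step is to fix an irredundant set $S = \{g_1, \ldots, g_k\}$ of $G$ that realizes the maximum, so that $k = i(G)$, and then to analyze the subgroup $H = \langle S \rangle$. I would split into two cases according to whether $H = G$. If $H = G$, then $S$ is an irredundant generating set of $G$ of size $i(G)$, so $m(G) \geq i(G)$ and hence $i(G) = m(G)$. If instead $H$ is a proper subgroup, then since $G$ is finite $H$ is contained in some maximal subgroup $M$; as $S \subseteq H \subseteq M$ is irredundant, it is an irredundant set of $M$, giving $i(M) \geq k = i(G)$, which together with $i(M) \leq i(G)$ forces $i(M) = i(G)$. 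In either case $i(G)$ coincides with one of the terms appearing in the maximum, so $i(G) \leq \max(m(G), \max(i(M)))$, completing the argument.

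I do not expect a genuine obstacle here, as essentially all of the needed facts have already been assembled in the paragraph above the statement; the proposition is really a repackaging of that discussion into a single clean formula. The only point that deserves a moment of care is the repeated claim that irredundance is preserved when a set is viewed in different ambient groups $H \leq M \leq G$. This holds because for any subset $s'$ the generated subgroup $\langle s' \rangle$ does not depend on which of these groups is taken as the ambient one, so whether a given element of $S$ is redundant is an intrinsic property of $S$. With that observation in hand, the case analysis above goes through without difficulty.
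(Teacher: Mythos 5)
Your proof is correct and follows essentially the same route as the paper: take an irredundant set of maximal size $k = i(G)$, consider the subgroup $H$ it generates, and split on whether $H = G$ (giving $i(G) = m(G)$) or $H$ lies in a maximal subgroup $M$ (giving $i(G) = i(M)$). The only cosmetic difference is that the paper routes the second case through the intermediate quantity $m(H)$, whereas you observe directly that $S$ is an irredundant subset of $M$; your note that irredundance is intrinsic to the set, independent of the ambient group, is exactly the point that makes both versions work.
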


Another useful fact to note is the following proposition.
\begin{proposition}
    \label{prop:m_leq_i}
    $m(G) \leq \max( i(M) ) + 1$ where $\max( i(M) )$ is taken over all maximal subgroups $M \leq G$.
\end{proposition}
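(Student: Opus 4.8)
The plan is to take a maximal-size irredundant generating set of $G$, delete a single generator, and recognize what remains as an irredundant set sitting inside a maximal subgroup; the bound then drops out immediately.

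First I would let $\{g_1, \ldots, g_k\}$ be an irredundant generating set of $G$ with $k = m(G)$, which exists by the definition of $m(G)$ (here $G$ is assumed nontrivial, so that maximal subgroups exist). Consider the subset $\{g_1, \ldots, g_{k-1}\}$ obtained by removing $g_k$. Because the full set is irredundant, the defining condition applied with $i = k$ says precisely that $H := \langle g_1, \ldots, g_{k-1}\rangle \neq \langle g_1, \ldots, g_k\rangle = G$. Thus $H$ is a proper subgroup of $G$, and since $G$ is finite, $H$ is contained in some maximal subgroup $M \leq G$.

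Next I would observe that $\{g_1, \ldots, g_{k-1}\}$ is itself irredundant — this is exactly the remark in the excerpt that every subset of an irredundant set is again irredundant — and that all of its elements lie in $H$, hence in $M$. Since irredundance is a statement purely about the subgroups $\langle \cdots \rangle$ generated by the listed elements, and these subgroups are the same whether computed inside $M$ or inside $G$, the set $\{g_1, \ldots, g_{k-1}\}$ is an irredundant set of size $k-1$ inside $M$. Consequently $i(M) \geq k - 1 = m(G) - 1$, which rearranges to $m(G) \leq i(M) + 1 \leq \max(i(M)) + 1$, as claimed.

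The argument is short, and there is no real obstacle so much as one point to state carefully: that irredundance transfers unchanged from $G$ to $M$. This follows at once from the fact that the definition only references the generated subgroups, which do not depend on the ambient group. A minor bookkeeping case is when $k = 1$, so that $H$ is trivial; there the empty set is vacuously irredundant, giving $i(M) \geq 0$ and the bound $m(G) = 1 \leq \max(i(M)) + 1$ still holds.
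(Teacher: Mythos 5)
Your proof is correct and follows exactly the paper's argument: remove one generator from a maximal-size irredundant generating set, note the resulting irredundant set of size $m(G)-1$ lies in a maximal subgroup $M$, and conclude $i(M) \geq m(G)-1$. The additional care you take (irredundance being independent of the ambient group, the $k=1$ edge case) merely fills in details the paper leaves implicit.
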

This follows from noting that if $g_1, \ldots, g_k$ is an irredundant generating set of size $m(G)$ then $\left< g_1, \ldots, g_{k-1} \right> \leq M$ for some maximal subgroup $M$ and so $i(M) \geq m(G) - 1$.

There is a slight generalization of these ideas that can be made.
Suppose that $C$ is a conjugacy class of elements in $G$.
Then we may define $m(G,C)$ to be the largest size of an irredundant generating set of $G$ whose elements are all in $C$.
Similarly, we define $i(G,C)$ to be the largest size of an irredunant set of $G$ whose elements are all in $C$.
It is immediate that $m(G,C) \leq m(G)$ and $i(G,C) \leq i(G)$.
These values are not a focus of this paper, but we are able to calculate these as well for some conjugacy classes in $G$.

In the interest of computations, we would like to know $m(G)$ and $i(G)$ in terms of smaller groups.
In the case of direct products, there is a nice result.

\begin{theorem}[Dan Collins and K. Dennis]
    Let $G$ and $H$ be finite groups. Then
    \[ m(G \times H) = m(G) + m(H) \]
    and
    \[ i(G \times H) = i(G) + i(H). \]
\end{theorem}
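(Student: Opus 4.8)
The plan is to prove each equality as two inequalities, isolating the lower bounds (which are easy) from the upper bounds (which carry all the content). For the lower bounds $m(G\times H)\ge m(G)+m(H)$ and $i(G\times H)\ge i(G)+i(H)$, I would argue by explicit construction. Identify $G$ with $G\times\{1\}$ and $H$ with $\{1\}\times H$ inside $G\times H$, and given an irredundant set $\{g_1,\ldots,g_a\}$ of $G$ and an irredundant set $\{h_1,\ldots,h_b\}$ of $H$, form the union $\{(g_1,1),\ldots,(g_a,1),(1,h_1),\ldots,(1,h_b)\}$. To check irredundance, apply $\pi_G$: removing some $(g_i,1)$ leaves a set whose generated subgroup projects onto $\langle g_1,\ldots,\widehat{g_i},\ldots,g_a\rangle$, which is proper in $G$, so the removed element cannot lie in the subgroup generated by the rest; the $H$-coordinates are handled symmetrically by $\pi_H$. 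If both starting sets generate, the union generates $G\times H$, giving the $m$ inequality; dropping the generating hypothesis gives the $i$ inequality.

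The upper bounds are the difficulty. The tempting approach is to take an irredundant set $S\subseteq G\times H$, split it as $S=S_G\sqcup S_H$, and argue that $\pi_G(S_G)$ is irredundant in $G$ and $\pi_H(S_H)$ is irredundant in $H$, so that $|S|\le i(G)+i(H)$. The main obstacle is that irredundance is not a matroid-independence condition: an element of $G\times H$ can fail to lie in a subgroup even though each coordinate lies in the corresponding coordinate projection of that subgroup. Concretely, in $\Z_3\times\Z_3$ the set $\{(1,1),(1,2)\}$ is irredundant, yet splitting it by which elements are essential for a given projection fails, and the maximal subgroups witnessing irredundance are themselves diagonal and ignore the factorization; so any naive greedy split or quotient-subadditivity argument breaks down, and the splitting must be chosen globally rather than read off coordinatewise.

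The route I would pursue is a simultaneous strong induction on $|G\times H|$. Using Proposition~\ref{prop:i_is_max}, write $i(G\times H)=\max\big(m(G\times H),\max_M i(M)\big)$ over maximal subgroups $M$, which reduces the $i$-bound to (a) the bound $m(G\times H)\le m(G)+m(H)$ and (b) the bound $i(M)\le i(G)+i(H)$ for every maximal $M$. I would then classify the maximal subgroups of $G\times H$ by Goursat's lemma. The product-type subgroups $M_0\times H$ (with $M_0$ maximal in $G$) and $G\times N_0$ (with $N_0$ maximal in $H$) are themselves direct products of strictly smaller order, so the induction hypothesis gives $i(M_0\times H)=i(M_0)+i(H)\le i(G)+i(H)$, and symmetrically, since $i(M_0)\le i(G)$.

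The main obstacle is the remaining family of \emph{diagonal} maximal subgroups: the subdirect products $M=\{(g,h):\theta(gC)=hD\}$ arising from an isomorphism $\theta\colon G/C\to H/D$ onto a common simple quotient $Q$. Such an $M$ is not a direct product, so the induction hypothesis does not apply to it directly; instead $M$ contains $C\times D$ as a normal subgroup with $M/(C\times D)\cong Q$, and the delicate point is to bound $i(M)$ in terms of $i(C)+i(D)$ together with the contribution of the simple quotient $Q$, and to show that this contribution is already absorbed into the budget $i(G)+i(H)$. The generating statement (a) requires separate care for the same reason: a coloring adequate for $i$ does not guarantee that $\pi_G(S_G)$ \emph{generates} $G$, so the generating condition must be tracked through the induction rather than deduced from Proposition~\ref{prop:m_leq_i}, which loses an additive constant and is too weak here. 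Once (a) and the diagonal case of (b) are settled, combining them with the lower bounds yields both equalities.
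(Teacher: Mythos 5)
The paper itself states this theorem without proof, attributing it to Collins and Dennis, so there is no internal argument to compare against; your attempt must stand on its own, and it does not yet. Your lower-bound construction is correct and complete: embedding the two irredundant sets coordinatewise and using the projections $\pi_G$, $\pi_H$ to verify irredundance is exactly right, and it gives both $m(G\times H)\ge m(G)+m(H)$ and $i(G\times H)\ge i(G)+i(H)$. You also correctly diagnose why the naive coordinatewise splitting fails, and your $\Z_3\times\Z_3$ example is apt. But everything after that is a plan rather than a proof. The reduction via Proposition~\ref{prop:i_is_max} needs two inputs you never supply: the bound $m(G\times H)\le m(G)+m(H)$, which you defer with the remark that generation ``must be tracked through the induction,'' and the bound $i(M)\le i(G)+i(H)$ for the diagonal (subdirect) maximal subgroups from Goursat's lemma, which you explicitly label a ``delicate point'' and leave open. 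Those two items are the entire content of the theorem; the product-type maximal subgroups you do handle are the easy case.

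Worse, the absorption step you gesture at for the diagonal case is likely unworkable as stated. To bound $i(M)$ for $M$ with $C\times D\trianglelefteq M$ and $M/(C\times D)\cong Q$ by something like $i(C)+i(D)+i(Q)$ and then absorb $i(Q)$ into the budget, you would need superadditivity of $i$ over extensions, i.e.\ $i(G)\ge i(C)+i(G/C)$ --- and that is false: $i(\Z_4)=1$ while $i(\Z_2)+i(\Z_4/\Z_2)=2$. So the inductive route through maximal subgroups hits a genuine wall at exactly the case you flagged, and a different idea is required. The known proofs proceed instead by working directly with an irredundant set $S\subseteq G\times H$ and establishing a partition/exchange lemma: $S$ can be split (globally, as you anticipated, not by inspecting coordinates of individual elements) into $S_G\sqcup S_H$ so that $\pi_G(S_G)$ is irredundant in $G$ and $\pi_H(S_H)$ is irredundant in $H$, with the generating property preserved in the $m$ case. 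Until you prove such a lemma, or find a genuine substitute for the failed superadditivity, the two upper bounds --- and hence the theorem --- remain unproved.
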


Furthermore, these values are known for some important groups.
Whiston\cite{WHISTON} gives the following result.
\begin{theorem}[Whiston]
    \[m(S_n) = i(S_n) = n-1 \]
    \[m(A_n) = i(A_n) = n-2 \]
\end{theorem}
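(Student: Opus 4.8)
The plan is to prove the two chains of equalities together by induction on $n$, matching explicit lower bounds against upper bounds obtained from the maximal subgroups. The lower bounds come from exhibiting irredundant generating sets, and the upper bounds come from bounding $i(M)$ over all maximal subgroups $M$ and feeding this into Propositions~\ref{prop:m_leq_i} and~\ref{prop:i_is_max}. A universal tool throughout is the observation that if $\{g_1,\ldots,g_k\}$ is irredundant then the chain $\langle g_1\rangle < \langle g_1,g_2\rangle < \cdots < \langle g_1,\ldots,g_k\rangle$ is strictly increasing, so each term has index at least $2$ in the next; hence $i(G) \le \log_2|G|$ for every finite group $G$. This crude estimate will dispose of every maximal subgroup whose order is small relative to $n$.

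For the lower bounds I would use familiar generating sets. For $S_n$ take the Coxeter transpositions $s_i = (i\ i+1)$, $1 \le i \le n-1$; deleting $s_j$ leaves a set generating the Young subgroup $S_{\{1,\ldots,j\}} \times S_{\{j+1,\ldots,n\}}$, a proper subgroup, so the set is irredundant and $m(S_n) \ge n-1$. For $A_n$ take the $n-2$ three-cycles $(1\ 2\ j)$ with $3 \le j \le n$; these generate $A_n$, and deleting $(1\ 2\ k)$ leaves a set fixing $k$, hence contained in the point stabilizer $A_{n-1}$, so again the set is irredundant and $m(A_n) \ge n-2$.

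For the upper bounds, assume the theorem for all degrees below $n$ and classify the maximal subgroups $M$ via the O'Nan--Scott theorem into intransitive, imprimitive, and primitive types. The intransitive maximal subgroups of $S_n$ are the $S_k \times S_{n-k}$, for which the direct product theorem and the inductive hypothesis give $i(M) = (k-1) + (n-k-1) = n-2$ exactly; the intransitive maximal subgroups of $A_n$ are the corresponding index-two subgroups, which require a short separate argument to show $i(M) \le n-3$. The primitive maximal subgroups other than $A_n$ are handled by the bound $i(M) \le \log_2|M|$ together with order estimates from the classification. Combining these with the lower bounds through Propositions~\ref{prop:m_leq_i} and~\ref{prop:i_is_max} then yields $m(S_n) = i(S_n) = n-1$ and $m(A_n) = i(A_n) = n-2$.

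The hard part will be the primitive and imprimitive subgroups. For the primitive ones the naive Praeger--Saxl bound $|M| < 4^n$ only gives $i(M) < 2n$, which is far too weak; I would instead invoke the sharper estimate (Mar\'oti) that a primitive group of degree $n$ other than $A_n$ or $S_n$ has order at most $n^{1 + \lfloor \log_2 n \rfloor}$ apart from an explicit finite list of exceptions, whence $\log_2|M| \le n-2$ for all sufficiently large $n$; the finitely many exceptional groups and the finitely many small degrees are then checked directly. The imprimitive subgroups $S_a \wr S_b$ with $ab = n$ are equally delicate, since $\log_2|S_a \wr S_b| = \log_2\!\big((a!)^b\,b!\big)$ exceeds $n$, so no order bound suffices and one needs a dedicated estimate of $i(S_a \wr S_b)$ exploiting its structure as an extension of the base group $S_a^{\,b}$ by $S_b$. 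These two families, rather than the relatively clean intransitive and direct-product computations, are where essentially all of the real work lies.
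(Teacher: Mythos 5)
You should note first that the paper contains no proof of this theorem at all: it is imported verbatim from Whiston \cite{WHISTON}, so the only fair comparison is with Whiston's cited argument. Your outline does in fact track his strategy --- explicit irredundant sets for the lower bounds (both of your sets are correct: deleting an adjacent transposition leaves a Young subgroup, and deleting $(1\,2\,k)$ leaves a subgroup of the stabilizer of $k$), then induction over the O'Nan--Scott trichotomy of maximal subgroups, with order bounds disposing of the primitive case. The crude bound $i(G) \leq \log_2 |G|$ is sound, and the intransitive computation via the direct product theorem is exactly right.

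The genuine gap is the one you yourself flag and then do not fill: the imprimitive maximal subgroups $S_a \wr S_b$. Saying ``one needs a dedicated estimate of $i(S_a \wr S_b)$'' is a statement of the problem, not a proof, and it is precisely where Whiston's real work lies. The missing tool is the subadditivity lemma for normal subgroups, $i(G) \leq i(N) + i(G/N)$, applied with $N$ the base group $S_a^{\,b}$; but note that even this only yields $i(S_a \wr S_b) \leq b(a-1) + (b-1) = n-1$, which is one more than the $n-2$ your induction requires (via Proposition~\ref{prop:m_leq_i}, a maximal subgroup with $i(M) = n-1$ only gives $m(S_n) \leq n$). Whiston must therefore carry a stronger inductive statement and analyze the wreath products more finely to shave off that last unit, and an analogous refinement is needed for the imprimitive and intransitive subgroups of $A_n$, which you likewise defer to ``a short separate argument.'' (The intransitive $A_n$ case at least does follow quickly from the same lemma: $(S_k \times S_{n-k}) \cap A_n$ contains $A_k \times A_{n-k}$ with quotient of order 2, giving $i \leq (k-2) + (n-k-2) + 1 = n-3$.) Two smaller remarks: your appeal to Mar\'oti's bound is anachronistic relative to Whiston's proof, though mathematically legitimate since $(1 + \lfloor \log_2 n \rfloor)\log_2 n \leq n-2$ only for roughly $n \geq 32$, leaving a finite but nontrivial list of primitive groups and small degrees to check by hand; and your plan never states the strengthened induction hypothesis (that the bound holds for $i$, not just $m$, in all smaller degrees), which the direct-product and wreath-product steps silently require. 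As written, then, this is a correct road map with the hardest leg of the journey missing.
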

Cyclic groups are also known.
\begin{theorem}
    Let $\nu(n)$ be the number of distinct primes dividing $n$. Then
    \[m(\Z_n) = i(\Z_n) = \nu(n). \]
\end{theorem}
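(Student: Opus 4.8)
The plan is to reduce the computation to the case of prime powers and then invoke the Collins--Dennis direct product theorem stated above. Write $n = p_1^{a_1}\cdots p_k^{a_k}$ with the $p_i$ distinct primes, so that $k = \nu(n)$. The Chinese Remainder Theorem gives an isomorphism $\Z_n \cong \Z_{p_1^{a_1}} \times \cdots \times \Z_{p_k^{a_k}}$, which carries generating sets to generating sets and irredundant sets to irredundant sets. Since $m$ and $i$ are isomorphism invariants, it then suffices to compute $m$ and $i$ on each cyclic $p$-group factor and add up the contributions.

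First I would establish the base case $m(\Z_{p^a}) = i(\Z_{p^a}) = 1$ for a prime power $p^a$. The essential structural fact is that the subgroups of $\Z_{p^a}$ are totally ordered by inclusion: $0 < \Z_p < \Z_{p^2} < \cdots < \Z_{p^a}$. Consequently the subgroup generated by any subset $S$ equals $\langle g \rangle$ for an element $g \in S$ of maximal order. If $S$ generates $\Z_{p^a}$, that maximal-order element already generates the whole group, so every other element of $S$ is redundant; hence an irredundant generating set is a singleton and $m(\Z_{p^a}) = 1$. For $i$, any irredundant set generates some subgroup $\Z_{p^b}$ and is an irredundant generating set of that subgroup, so by the above it has size $m(\Z_{p^b}) = 1$; thus $i(\Z_{p^a}) = 1$ as well. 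This also follows from Proposition~\ref{prop:i_is_max}, since the unique maximal subgroup $M = \Z_{p^{a-1}}$ satisfies $i(M) = 1$ by induction.

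With the base case in hand, I would apply the Collins--Dennis theorem directly to the CRT decomposition:
\[ m(\Z_n) = \sum_{j=1}^{k} m(\Z_{p_j^{a_j}}) = \sum_{j=1}^{k} 1 = k = \nu(n), \]
and identically $i(\Z_n) = \sum_{j=1}^{k} i(\Z_{p_j^{a_j}}) = \nu(n)$. This yields both equalities at once, with no separate lower-bound construction required.

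The point where a naive argument fails — the main obstacle — is the collapse from prime-power multiplicity to a count of distinct primes. A direct chain argument shows only that an irredundant generating set produces a strictly increasing chain of subgroups, bounding its size by the length $\Omega(n) = a_1 + \cdots + a_k$ of the longest subgroup chain in $\Z_n$; this over-counts whenever $n$ is not square-free. The prime-power splitting above is precisely what removes the multiplicities. An alternative route that avoids Collins--Dennis is to pass to the Frattini quotient $\Z_n/\Phi(\Z_n) \cong \Z_{p_1\cdots p_k}$, whose order is square-free: using that elements of $\Phi(\Z_n)$ are non-generators, the image of an irredundant generating set remains irredundant and generating and retains its cardinality, after which the chain bound $\Omega(p_1\cdots p_k) = \nu(n)$ becomes sharp. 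Verifying that irredundancy survives passage to this quotient is the one step demanding genuine care.
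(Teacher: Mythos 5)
Your proof is correct, and it is worth noting at the outset that the paper itself states this theorem without proof, so there is no in-paper argument to match; your write-up supplies a complete one using tools the paper has already introduced. Your route --- the CRT splitting $\Z_n \cong \Z_{p_1^{a_1}} \times \cdots \times \Z_{p_k^{a_k}}$, the chain-of-subgroups argument giving $m(\Z_{p^a}) = i(\Z_{p^a}) = 1$, and then the Collins--Dennis product theorem --- is sound and has the virtue of computing $m$ and $i$ simultaneously. For comparison, the paper's other Collins--Dennis result (for $G$ solvable, $m(G)$ is the number of non-Frattini factors in a chief series) gives $m(\Z_n) = \nu(n)$ even more directly: a chief series of $\Z_n$ has $\Omega(n) = a_1 + \cdots + a_k$ factors, of which exactly $\nu(n)$ are non-Frattini, since the repeated prime factors lie below $\Phi(\Z_n) \cong \Z_{n/(p_1\cdots p_k)}$; the value $i(\Z_n) = \nu(n)$ then follows because every subgroup of $\Z_n$ is some $\Z_d$ with $d \mid n$, so by the observation preceding Proposition~\ref{prop:i_is_max} one gets $i(\Z_n) = \max_{d \mid n} m(\Z_d) = \nu(n)$. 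Your Frattini-quotient alternative is essentially this same mechanism, and the step you rightly flag as delicate does go through: since elements of $\Phi(G)$ are non-generators, a set generates $G$ if and only if its image generates $G/\Phi(G)$, so if the image of your irredundant generating set were redundant (or had two generators with equal image), deleting the corresponding generator would still generate modulo $\Phi(\Z_n)$ and hence generate $\Z_n$, contradicting irredundancy; thus the image is irredundant, generating, and of the same cardinality. The only edge case left implicit is $n = 1$, where $m(\Z_1) = i(\Z_1) = 0 = \nu(1)$.
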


We also have a concept of flatness of a group.
\begin{definition}
    A group $G$ is called \emph{flat} if $m(G) = i(G)$.
    $G$ is called \emph{strongly flat} if $G$ is flat and if $s$ is any irredundant set of size $m(G)$ then $s$ generates $G$.
\end{definition}
Note that $G$ flat is equivalent to $m(H) \leq m(G)$ for all subgroups $H \leq G$ and $G$ strongly flat is equivalent to $m(H) < m(G)$ for all proper subgroups $H < G$.

\begin{definition}
Suppose that $s = \{g_1, \ldots, g_k\}$ is an irredundant generating set for $G$.
The set $s$ satisfies the \emph{replacement property} if for every nontrivial $g \in G$ there is an index $1 \leq i \leq k$ so that the set with $g_i$ replaced with $g$ also generates $G$.
If every irredundant generating set of size $k$ satisfies the replacement property, then $G$ is said to satisfy the \emph{replacement property for size $k$}.
\end{definition}

It turns out that if $G$ satisfies the replacement property for size $k$, then $k = m(G)$, so it is convenient to simply say that $G$ satisfies the replacement property.

\subsection{Solvable Groups}

An important class of groups where computation of $m(G)$ is quite feasible is the case where $G$ is solvable.
For any group $G$, we call any sequence $\{e\} = H_0 \leq H_1 \ldots \leq H_k = G$ of maximal length with each $H_i$ normal in $G$ a \emph{chief series} of $G$.
We call a group $G$ \emph{solvable} if there is a sequence of subgroups $\{e\} = H_0 \leq H_1 \ldots \leq H_k = G$ with each $H_i$ normal in $G$ and each $H_{i+1}/H_i$ abelian.
So for every solvable group, $H_0 \leq \ldots \leq H_k$ is a chief series.

We also need one further definition.
\begin{definition}
    For $G$ a group, the Frattini subgroup $\Phi(G)$ is the intersection of all maximal subgroups of $G$.
    If $G$ has no maximal subgroups, then define $\Phi(G) = G$ (for example, if $G$ is trivial or some non-finite groups).
\end{definition}

Then we get a useful result that allows computation of $m(G)$ on many groups that are commonly encountered.
\begin{theorem}[Dan Collins and K. Dennis]
    Let $G$ be a finite solvable group.
    Then $m(G)$ is the number of non-Frattini factors in any chief series for $G$.
\end{theorem}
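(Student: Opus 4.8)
The plan is to induct on $|G|$ (the trivial group being immediate), peeling off a minimal normal subgroup $N$ of $G$. Since $G$ is solvable, $N$ is elementary abelian, and since $N$ is minimal normal it is an irreducible $\F_p[G]$-module on which $N$ itself acts trivially, so the conjugation action factors through $G/N$. Any chief series of $G$ has the form $\{e\} = H_0 \le N = H_1 \le \cdots \le H_k = G$, and $\{e\} \le H_2/N \le \cdots \le H_k/N = G/N$ is then a chief series of $G/N$. Frattini-ness of each factor $H_{i+1}/H_i$ with $i \ge 1$ is unchanged under the isomorphism $(G/N)/(H_i/N) \cong G/H_i$, while the bottom factor $N$ is non-Frattini exactly when $N \not\le \Phi(G)$. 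Writing $c(G)$ for the number of non-Frattini factors of the given series, this yields $c(G) = c(G/N) + \epsilon$ with $\epsilon = 1$ if $N \not\le \Phi(G)$ and $\epsilon = 0$ otherwise. As the theorem holds for $G/N$ by induction applied to the quotiented series, it suffices to prove $m(G) = m(G/N) + \epsilon$.

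First I would dispose of the Frattini case $N \le \Phi(G)$, where I must show $m(G) = m(G/N)$. The key is that elements of $\Phi(G)$ are nongenerators, so a subset of $G$ generates $G$ if and only if its image generates $G/N$; consequently removing an element drops below $G$ precisely when its image drops below $G/N$. From this one checks that reduction mod $N$ sends an irredundant generating set of $G$ to one of $G/N$ of the same size (equal images would already be removable), and that lifting an irredundant generating set of $G/N$ produces one of $G$. Hence $m(G) = m(G/N)$, matching $\epsilon = 0$.

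For the non-Frattini case I must show $m(G) = m(G/N) + 1$. For the lower bound, $N \not\le \Phi(G)$ gives a maximal subgroup $M$ with $N \not\le M$; maximality forces $MN = G$, and $M \cap N$, normalized by both $M$ and the abelian $N$, is a proper $G$-invariant subgroup of $N$, so $M \cap N = \{e\}$ and $M$ is a complement isomorphic to $G/N$. I would take an irredundant generating set of $M$ of size $m(M) = m(G/N)$ and adjoin a nontrivial $n \in N$. Irreducibility of $N$ as an $M$-module shows the $M$-conjugates of $n$ already fill all of $N$, so the enlarged set generates $G$; it is irredundant because deleting $n$ returns the proper subgroup $M$, while deleting an original generator drops its image in $G/N \cong M$ to a proper subgroup. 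Thus $m(G) \ge m(G/N) + 1$.

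The crux is the matching upper bound $m(G) \le m(G/N) + 1$, and this is where I expect the real effort to lie. Given an irredundant generating set $S$ with $|S| = m(G)$, its image generates $G/N$, so I can choose $T \subseteq S$ minimal with $\langle \overline{T}\rangle = G/N$; then $\overline{T}$ is irredundant, so $|T| \le m(G/N)$. Since $\langle T\rangle N = G$, the subgroup $\langle T\rangle \cap N$ is $G$-invariant, so $\langle T\rangle$ is either all of $G$ or a complement to $N$. If $\langle T\rangle = G$, irredundancy forces $S = T$ and $|S| \le m(G/N)$. Otherwise $\langle T\rangle$ is a complement, and for any $s \in S \setminus T$ I write $s = b\,n_s$ with $b \in \langle T\rangle$, $n_s \in N$; irreducibility of $N$ shows that $n_s = e$ would make $s$ redundant, while $n_s \ne e$ forces $\langle T, s\rangle = G$. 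The decisive point is that the latter cannot hold for two distinct $s, s' \in S \setminus T$, since then $s' \in \langle T, s\rangle \subseteq \langle S \setminus \{s'\}\rangle$, contradicting irredundancy. Hence $|S \setminus T| \le 1$ and $m(G) \le m(G/N) + 1$. The whole obstacle is thereby funneled into a single module-theoretic fact — that an irreducible $N$ is generated over a complement by any one of its nonzero elements — and it is precisely this fact that forces $N$ to absorb at most one generator and pins the increment at $\epsilon$.
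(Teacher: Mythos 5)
Your argument is essentially correct, but there is nothing in the paper to compare it against: the paper states this theorem as an unpublished result of Collins and Dennis and gives no proof at all, offering only the heuristic remark that Frattini elements are nongenerators. So your write-up supplies a proof where the paper has none, and it is the natural one: induct on $|G|$, split off the bottom chief factor, handle the Frattini case via the nongenerator property of $\Phi(G)$, and in the non-Frattini case use that a minimal normal subgroup $N$ of a solvable group is an irreducible module complemented by some maximal subgroup, so that $N$ contributes exactly one to $m(G)$. All the individual steps check out: the complement argument ($M\cap N$ is normalized by $M$ and by the abelian $N$, hence normal in $MN=G$), the lower bound by adjoining one nontrivial $n\in N$ to an irredundant generating set of the complement, and the key upper-bound step that any two elements of $S\setminus T$ would each generate $G$ together with $T$, violating irredundancy --- this last point, which you rightly identify as the crux, is exactly what pins $m(G)=m(G/N)+\epsilon$.

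One phrasing you should repair: you first fix a minimal normal subgroup $N$ and then assert that \emph{any} chief series of $G$ has $H_1=N$, which is false (different chief series may start at different minimal normal subgroups). The correct order of quantifiers is to fix the chief series first and set $N:=H_1$, noting that $H_1$ is minimal normal precisely because the series has maximal length; your induction then shows the non-Frattini count of \emph{that} series equals $m(G)$, and since $m(G)$ is independent of the series, the well-definedness claimed by the word ``any'' in the theorem comes out as a corollary rather than an assumption. With that reordering, and the (standard, but worth stating) observation that $\{H_i/N\}_{i\geq 1}$ is again a chief series of $G/N$ whose factors have the same Frattini status via $(G/N)/(H_i/N)\cong G/H_i$, the proof is complete.
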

This result is intuitive since the elements of the Frattini group of $G$ are called ``non-generators'' and can never appear in an irredundant generating set of $G$.

\section{Lower Bounds}

The most direct way to give a lower bound for either $m(G)$ or $i(G)$ is simply to exhibit a set $\{g_1, \ldots, g_k\}$ that satisfies the desired properties.
The simplest method is a brute-force computation checking all the possible size $k$ subsets of $G$.

Another means to exhibit generating sets is by considering a maximal subgroup $M$ of $G$.
If we have an irredundant set $\{g_1, \ldots, g_k\}$ which generates $M$, then adding any additional element $g_0 \in G \setminus M$ to get $\{g_0, g_1, \ldots, g_k\}$ gives a generating set.
This set is not necessarily irredundant.
So to find irredundant generating sets of $G$, we may instead find ones of the smaller group $M$ and check whether extensions are irredundant.

\subsection{Tarksi Extensions}

One method to find irredundant generating sets -- and hence give a lower bound on $m(G)$ -- is referred to as finding `Tarski extensions.'
Suppose that we have an irredundant set $\{g_1, \ldots, g_k\}$.
Then a Tarksi extension of this set is one gotten by replacing some $g_i$ with both $a,b \in G$ where $ab = g$.
That is, the set $\{ a, b, g_2, \ldots, g_k\}$ is a Tarski extension of $\{g_1, \ldots, g_k\}$ if $ab = g_1$.
Certainly this extension generates at least as much as $\{g_1, \ldots, g_k\}$.
If it happens to be irredundant, then we take this to be a successful extension.
This gives a simple, if perhaps inefficient, way to go from short irredundant sets to longer ones.

It is not known exactly which irredundant generating sets have successful Tarksi extensions to other irredundant sets.
In particular, one computation started with a size 2 irredundant generating set and attempted to recursively extend it to a size 5 irredundant set in $M_{11}$ failed to find any extensions beyond size 4.
This suggests that it is not always possible to extend from a minimal size set to a maximal size one.

The idea of these extensions originates from a theorem by Tarski that shows that there is an irredundant, generating subset of $G$ of each size from the smallest possible all the way up to $m(G)$.
Tarksi's proof works by going in the reverse direction of our computations; he shows that for each irredundant generating set of size $k$, $k$ is either the minimal size or there is a set of size $k-1$.

In practice, successful computations were carried out by first guessing the value of $m(G)$ through the techniques in the Upper Bounds section and then brute-force computing sequences of size $m(G)-1$.
Each of these sequences could then be tried for Tarski extensions until one is found.

Furthermore, computations focused on finding sets of elements of order~2.
In order to find these, it is necessary to start with a set $\{g_1, \ldots, g_n\}$, before applying a Tarski extension, where $g_1 = ab$ where $a,b \in G$ are of order 2 and $g_2, \ldots, g_n$ are all of order 2.
Since $a,b$ are of order 2, $\left<a,b\right>$ must be a dihedral group of order $2k$ where $k$ is the order of $ab = g_1$.
Since not all dihedral groups occur within $M_{11}$ or $M_{12}$, we may eliminate many possible sets by restricting the order of $g_1$ only to the orders which arise as dihedral groups.
Specifically, for $M_{11}$ quick computations with GAP show that only $k=2,3,4,5,6$ is possible and for $M_{12}$ only $k = 2,3,4,5,6,8,10$ is possible.
Reducing the search to only consider order two elements makes this problem feasible.
Sequences for $M_{11}$, with only 7,920 elements are quick to find even while looking through all possible generators, but $M_{12}$, having 95,040 elements but only 891 order 2 elements, proved too large to find any sets without using this simplification.
While this simplification is not guaranteed to work, computations without it took too long.

We also know some results about when Tarksi extensions can be successful.
\begin{proposition}
    Suppose $s = \{g_1, \ldots, g_k\}$ is an irredundant generating set of size $k$, and define $H_i = \left<g_1, \ldots, \widehat g_i, \ldots, g_k\right>$ generated by size $k-1$ subsets of $s$.
    If $H_i$ is contained in a unique maximal subgroup $M_i$ of $G$, then a Tarski extension replacing $g_i$ will fail to be irredundant.
\end{proposition}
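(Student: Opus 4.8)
The plan is to argue by contradiction. Suppose the Tarski extension is irredundant; writing $g_i = ab$, let $s' = \{g_1, \ldots, g_{i-1}, a, b, g_{i+1}, \ldots, g_k\}$ denote the extended set. The one fact I would isolate at the start is a uniqueness-of-containment observation: because $G$ is finite, every proper subgroup is contained in at least one maximal subgroup, so if $H_i$ lies in a \emph{unique} maximal subgroup $M_i$, then every proper subgroup of $G$ containing $H_i$ is in fact contained in $M_i$. Indeed, any maximal subgroup lying above such a subgroup also lies above $H_i$, and hence must equal $M_i$.

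Next I would read off what irredundancy of $s'$ forces on $a$ and $b$ separately. Deleting $a$ from $s'$ produces $\left<H_i, b\right>$, and deleting $b$ produces $\left<H_i, a\right>$. If $s'$ is irredundant, each of these is a proper subgroup of $G$. Since each contains $H_i$, the uniqueness observation gives $\left<H_i, a\right> \leq M_i$ and $\left<H_i, b\right> \leq M_i$; in particular $a \in M_i$ and $b \in M_i$.

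Finally I would combine these facts. Because $a, b \in M_i$ we get $g_i = ab \in M_i$, and $H_i \leq M_i$ by the choice of $M_i$, so
\[ G = \left<g_1, \ldots, g_k\right> = \left<H_i, g_i\right> \leq M_i < G, \]
a contradiction. Hence $s'$ cannot be irredundant, which is the claim.

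I do not anticipate a serious obstacle, as the argument is essentially immediate once the uniqueness-of-containment fact is stated. The only point requiring a little care is that I apply irredundancy to the two separate deletions of $a$ and of $b$; this is legitimate exactly when the extension is a genuine enlargement, that is, when $a$ and $b$ are distinct from each other and from the surviving generators, so that $s'$ has $k+1$ elements. This is precisely the situation of interest, since the purpose of a Tarski extension is to pass from a shorter irredundant set to a longer one.
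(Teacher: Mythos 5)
Your proof is correct and takes essentially the same approach as the paper: both apply irredundancy of the extended set to the two deletions of $a$ and of $b$, obtaining proper subgroups containing $H_i$, and then exploit the hypothesis that $H_i$ lies under a unique maximal subgroup. The only cosmetic difference is that the paper reaches the contradiction by exhibiting two \emph{distinct} maximal subgroups over $H_i$, whereas you collapse both containments into $M_i$ and conclude $G = \left<H_i, ab\right> \leq M_i$; these are mirror images of one argument.
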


\begin{proof}
    Suppose that $s = \{g_1, \ldots, g_k\}$ extends to an irredundant generating set $\{a,b, g_2, \ldots, g_k\}$.
    Then $\left<a,g_2, \ldots, g_k\right> \leq M_1$ and $\left<b, g_2, \ldots, g_k\right> \leq M_2$ for maximal subgroups $M_1, M_2 \subset G$.
    Since $M_1$ is maximal and $\left<a,b,g_2, \ldots, g_k\right>$ generates $G$, $b \not\in M_1$.
    Similarly, $a \not\in M_2$, and so $M_1$ and $M_2$ are distinct maximal subgroups.
    Hence $\left<g_2, \ldots, g_k\right>$ is contained in both $M_1$ and $M_2$.
\end{proof}
In particular, this means that if $\left<g_1, \ldots, \widehat g_i, \ldots, g_k\right>$ is maximal then we cannot successfully Tarksi extend $g_i$.
This gives a simple test to avoid trying hopeless Tarksi extensions.

\subsection{Computed Sequences}

Computations yielded the following result.

\begin{proposition}
    \label{prop:lower_bound}
    $m(M_{11}) \geq 5$ and $m(M_{12}) \geq 6$.
\end{proposition}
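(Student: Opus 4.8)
The plan is to verify these bounds by exhibiting explicit irredundant generating sets: since $m(G) \geq k$ is witnessed by any single irredundant generating set of size $k$, it suffices to produce one set of size $5$ in $M_{11}$ and one of size $6$ in $M_{12}$ and then check the two defining conditions directly. Concretely, for a candidate set $s = \{g_1, \ldots, g_k\}$ I would confirm generation by computing $|\langle s \rangle|$ and checking it equals $|G|$, and confirm irredundance by computing, for each $i$, the order of $H_i = \langle g_1, \ldots, \widehat{g_i}, \ldots, g_k\rangle$ and checking $H_i \neq G$. Both tests are immediate in GAP once a candidate is in hand, so the real work lies in the search that produces a candidate.

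To produce candidates I would use the Tarski-extension procedure described above, seeded by involutions. Starting from an irredundant generating set of order-$2$ elements of small size, I repeatedly replace a single element $g_i$ by a factorization $g_i = ab$ and test whether the enlarged set remains irredundant, pruning any branch in which $\langle s \setminus \{g_i\}\rangle$ is already maximal (or more generally lies in a unique maximal subgroup), since the preceding proposition shows such a replacement can never succeed. For $M_{11}$, which has only $7{,}920$ elements, this search is cheap enough to run even over all candidate generators, and I expect to reach a size-$5$ irredundant generating set without difficulty.

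For $M_{12}$ the group is an order of magnitude larger, so I would restrict the search entirely to involutions, of which there are only $891$. The key structural constraint is that if $s$ is built from order-$2$ elements and $g_1 = ab$ with $a, b$ of order $2$, then $\langle a, b\rangle$ is dihedral of order $2k$ where $k$ is the order of $g_1$; since the only dihedral groups occurring in $M_{12}$ have $k \in \{2,3,4,5,6,8,10\}$, I may discard at the outset every factorization whose product has any other order. This dramatically narrows the factorizations that need to be examined at each extension step and makes the recursion tractable, eventually yielding a size-$6$ set.

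The main obstacle is the combinatorial size of the $M_{12}$ search rather than any verification: an unrestricted brute-force pass over size-$5$ subsets of a $95{,}040$-element group is prohibitively slow, so the argument depends on the involution restriction together with the dihedral-order pruning to cut the search to a feasible size. I should stress that this reduction is a heuristic — restricting to involutions is not guaranteed to find a maximal irredundant set even when one exists — so the proposition as a lower bound rests on the search actually terminating with a valid witness, which is what the computation reports. Once such a witness is exhibited, the two GAP checks above complete the proof.
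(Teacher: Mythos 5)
Your proposal is correct and takes essentially the same route as the paper: the proposition is proved precisely by exhibiting explicit size-$5$ and size-$6$ irredundant generating sets of involutions (verified in GAP by the generation and deletion-subgroup checks you describe), found via Tarski extensions restricted to order-$2$ elements with dihedral-order pruning (the paper's $k \in \{2,3,4,5,6\}$ for $M_{11}$ and $k \in \{2,3,4,5,6,8,10\}$ for $M_{12}$), together with the unique-maximal-subgroup test to discard hopeless extensions. The only minor practical divergence is that the paper reports that fully recursive extension from small seeds stalled at size $4$ in $M_{11}$, so it instead brute-forces candidate sequences of size $m-1$ (involutions plus one element of ``dihedral'' order) and applies a single Tarski-extension step; this does not affect the validity of your argument, since, as you note, the proof rests only on the verified witness, not on how the search found it.
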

\begin{proof}
    These lower bounds are given by explicit irredundant generating sequences.
    Refer to Appendix A for the computations which produced these.
\end{proof}

One explicit, size 5 irredundant generating set of $M_{11}$ is given by the permutations
\begin{align*}
(4,10)(5,8)(6,7)(9,11) \\
(3,4)(5,7)(6,9)(8,11) \\
(3,5)(4,6)(7,9)(10,11) \\
(2,10)(3,11)(4,8)(6,9) \\
(1,3)(4,8)(5,10)(6,7)
\end{align*}
Here we are treating $M_{11}$ as a permutation group given by
\[ M_{11} = \left<  (1,2,3,4,5,6,7,8,9,10,11),\; (3,7,11,8)(4,10,5,6) \right>. \]
Similarly, a size 6 irredundant generating set of $M_{12}$ is
\begin{align*}
    (5,7)(6,11)(8,9)(10,12) \\
    (4,5)(6,12)(8,11)(9,10) \\
    (4,6)(5,10)(7,8)(9,12) \\
    (3,7)(4,8)(5,11)(9,10) \\
    (1,4)(5,11)(6,7)(10,12) \\
    (2,11)(4,8)(6,7)(9,12)
\end{align*}
where $M_{12}$ is given as a permutation group as
\begin{align*} M_{12} = \big< &(1,2,3,4,5,6,7,8,9,10,11), \; (3,7,11,8)(4,10,5,6),  \\
                              &\qquad(1,12)(2,11)(3,6)(4,8)(5,9)(7,10) \big>.
\end{align*}

Note that all of the generating elements are of order 2.
So these generating sets also answer two cases of the question, ``Is it possible to find, for $S$ a finite simple group, an irredundant generating set of size $m(S)$ whose elements are all of order 2?''
Furthermore, it is interesting to observe that every size $m-1$ subset of these elements generates a maximal subgroup.
For the given set generating $M_{11}$ the isomorphism classes of these subgroups are $(PSL(2,11), PSL(2,11), PSL(2,11), A_6, A_6)$, ordered by the $i$-th maximal subgroup being generated by all but the $i$-th generating element.
For $M_{12}$, all size 5 subsets of the given generating set generate copies of $M_{11}$.
So far, every maximal size irredundant generating set found for  $M_{11}$ and $M_{12}$ has also had every size $m-1$ subset generating maximal subgroups.

Not only are all the generating elements in these computed sets of order two, but all the elements are in the same conjugacy class.
In particular, the first set gives $m(M_{11}, 2A) \geq 5$, where $2A$ is the sole conjugacy class of order 2 elements in $M_{11}$.
The second set gives $m(M_{12}, 2B) \geq 6$, where $2B$ is the second conjugacy class of order 2 elements in $M_{12}$, which contains 495 elements.
Here the notation $2A$ and $2B$ refers to the conjugacy classes listed by the ATLAS of Finite Groups. \cite{ATLAS}

\section{Upper Bounds}

The primary means of constructing upper bounds on $m(G)$ is by giving an upper bound on $i(G)$ in terms of subgroups of $G$.
This is the method used by Whiston in his computation of $m(S_n) = i(S_n) = n-1$ and $m(A_n) = i(A_n) = n-2$.
First, we examine the maximal subgroups of $M_{11}$ and $M_{12}$.
Many of these subgroups we may compute with directly based upon their structure.
Of particular importance is the ability to compute $m$ and $i$ of solvable groups, which covers almost all the maximal subgroups of $M_{11}$ and $M_{12}$.
For those groups that cannot be computed directly, we then repeat this process by looking at their maximal subgroups and computing on those.
At each of these steps we apply Proposition~\ref{prop:i_is_max} to get an upper bound on both $m(G)$ and $i(G)$.

\subsection{Maximal Dimension}

In order to give an upper bound on $m(G)$, we will look at the maximum dimension of $G$.

\begin{definition}
Let $S = \{M_1, \ldots, M_k\}$ with each $M_i \leq G$.
$S$ is said to be in \emph{general position} for all $I \subsetneq J \subseteq \{1, 2, \ldots, k\}$,
\[ \bigcap _{i \in I} M_i \supsetneq \bigcap _{j \in J} M_j \]
\end{definition}

\begin{definition}
For $G$ a finite group, the \emph{maximum dimension} of $G$, denoted \\* $\mbox{MaxDim}(G)$, is defined to be
\[ \max \left\{ \left| S \right| : S \mbox{ in general position with each $M_i \in S$ maximal in $G$} \right\}. \]
\end{definition}

We now note that there is a relationship between $m(G)$ and $\mbox{MaxDim}(G)$.

\begin{proposition}
Let $G$ be a group and $g=\left\{g_1, g_2, \ldots, g_n\right\}$ an irredundant, generating set of $G$.
Each size $n-1$ subset of $G$ given by $\{g_1, \ldots, \widehat{g_{i}}, \ldots, g_n\}$ generates a proper subgroup of $G$ and so is contained in at least one maximal subgroup $K_i$ of G.
Let $K = \left\{K_1, K_2, ..., K_n \right\}$.
$K$ is in general position.
\end{proposition}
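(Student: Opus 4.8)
The plan is to exhibit, for each strict inclusion of index sets $I \subsetneq J \subseteq \{1, \ldots, n\}$, a single group element that lies in the smaller intersection $\bigcap_{i \in I} K_i$ but fails to lie in the larger intersection $\bigcap_{j \in J} K_j$. The natural candidate for such a witness is one of the generators $g_\ell$, and the entire argument rests on tracking precisely which generators each $K_i$ is guaranteed to contain.

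First I would record the defining property of the $K_i$: since $K_i$ contains $\langle g_1, \ldots, \widehat{g_i}, \ldots, g_n \rangle$, it contains every generator $g_j$ with $j \neq i$. The complementary and crucial observation is that $g_i \notin K_i$; for if $g_i$ did lie in $K_i$, then $K_i$ would contain all of $g_1, \ldots, g_n$ and hence equal $G$, contradicting that $K_i$ is a proper (maximal) subgroup. This is the one place where irredundancy of the generating set is really used, entering through the properness of each $K_i$.

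Next, given $I \subsetneq J$, I would choose an index $\ell \in J \setminus I$ and take $g_\ell$ as the witness. Because $\ell \notin I$, every $i \in I$ satisfies $i \neq \ell$, so $g_\ell \in K_i$ for all $i \in I$, placing $g_\ell$ in $\bigcap_{i \in I} K_i$. On the other hand $\ell \in J$ while $g_\ell \notin K_\ell$, so $g_\ell \notin \bigcap_{j \in J} K_j$. Since $I \subseteq J$ already forces $\bigcap_{j \in J} K_j \subseteq \bigcap_{i \in I} K_i$, the witness upgrades this containment to the strict inclusion $\bigcap_{i \in I} K_i \supsetneq \bigcap_{j \in J} K_j$ demanded by the definition of general position.

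There is no genuine obstacle here beyond careful index bookkeeping: the whole content is the pair of facts ``$g_j \in K_i$ for $j \neq i$'' and ``$g_i \notin K_i$.'' The only points requiring a moment's attention are confirming that a suitable $\ell$ always exists, which is immediate from $I \subsetneq J$, and observing that a single witness $g_\ell$ settles each pair $(I,J)$ uniformly, so that no case analysis over the internal structure of the $K_i$ is needed.
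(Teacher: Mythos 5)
Your proof is correct and is essentially identical to the paper's: both pick a witness index $\ell \in J \setminus I$ and use the two facts that $g_j \in K_i$ whenever $j \neq i$ and that $g_\ell \notin K_\ell$ (since $K_\ell$ is proper and the full set generates $G$) to get the strict inclusion. The only difference is presentational --- you spell out the ``$g_i \notin K_i$'' step that the paper states more tersely.
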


\begin{proof}
Suppose that $I \subsetneq J \subseteq \{1, 2, \ldots, n\}$.
Take $j \in J$ with $j \not\in I$.
Then $g_j \in \bigcap_{i \in I} K_i$.
Since $K_j$ contains $\{g_1, \ldots, \widehat{g_j}, \ldots, g_n\}$ and is a maximal subgroup of $G$, it cannot contain the final generator $g_j$.
So $g_j \not\in \bigcap_{j \in J} K_j$.
Hence, $K$ is in general position.

\end{proof}

\begin{corollary}
$m(G) \leq \mbox{MaxDim}(G) \leq i(G)$
\end{corollary}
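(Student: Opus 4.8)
The plan is to prove the two inequalities separately. The left inequality $m(G) \leq \mbox{MaxDim}(G)$ will follow almost immediately from the proposition just established, whereas the right inequality $\mbox{MaxDim}(G) \leq i(G)$ will require manufacturing an explicit irredundant set out of a family of maximal subgroups in general position. For the first inequality I would start from an irredundant generating set $g = \{g_1, \ldots, g_n\}$ of maximal size $n = m(G)$. The preceding proposition produces maximal subgroups $K_1, \ldots, K_n$ in general position, where each $K_i$ contains $\left<g_1, \ldots, \widehat{g_i}, \ldots, g_n\right>$. The only point needing verification is that these $n$ subgroups are genuinely distinct, so that the set $K$ has size exactly $n$: if $K_i = K_j$ for some $i \neq j$, then taking $I = \{i\}$ and $J = \{i,j\}$ would give $K_i = K_i \cap K_j$, contradicting the strict containment demanded by general position. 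Hence $K$ is a family of $n$ maximal subgroups in general position, and $\mbox{MaxDim}(G) \geq n = m(G)$.

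For the second inequality I would take a family $S = \{M_1, \ldots, M_k\}$ of maximal subgroups in general position realizing $k = \mbox{MaxDim}(G)$, and build from it an irredundant set of the same size. Applying the general position condition to the pair $I = \{1, \ldots, k\} \setminus \{i\}$ and $J = \{1, \ldots, k\}$ yields $\bigcap_{j \neq i} M_j \supsetneq \bigcap_{j=1}^{k} M_j$, so I may choose an element $h_i$ lying in every $M_j$ with $j \neq i$ but outside $M_i$. I then claim $\{h_1, \ldots, h_k\}$ is irredundant. Indeed, for each fixed $i$, every $h_j$ with $j \neq i$ lies in $M_i$, so $\left<h_1, \ldots, \widehat{h_i}, \ldots, h_k\right> \leq M_i$, while $h_i \notin M_i$; thus dropping $h_i$ omits an element of the full generated subgroup and strictly shrinks it. The same observation shows the $h_i$ are pairwise distinct, so this is a genuine irredundant set of size $k$, giving $i(G) \geq k = \mbox{MaxDim}(G)$.

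The step I expect to carry the real content is identifying the correct instance of the general position condition in each direction. In both inequalities the relevant instance uses an index set $I$ that is precisely $J$ with a single element removed, and it is the resulting strict drop in the all-but-one intersection that does all the work: in the first part it forces the $K_i$ to be distinct, and in the second it supplies the separating elements $h_i$. There is no analytic difficulty here; once the bookkeeping is arranged, both inequalities reduce to the single principle that an element witnessing a strict decrease in an intersection over all but one index serves as a generator which cannot be recovered from the remaining ones.
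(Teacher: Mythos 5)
Your proof is correct and matches the paper's (largely implicit) reasoning: the left inequality is exactly the preceding proposition applied to a maximal irredundant generating set, and your construction of the elements $h_i \in \bigcap_{j \neq i} M_j \setminus M_i$ is precisely the argument the paper itself uses inside the proof of Proposition~\ref{prop:MaxDim_ineqs} to extract an irredundant set from a family in general position. Your explicit check that the $K_i$ are pairwise distinct is a detail the paper leaves unstated, but it follows from general position just as you argue.
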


The above proposition also gives another useful result by noting that each $K_i$  contains an irredundant set $\{g_1, \ldots, \widehat{g_i}, \ldots, g_n\}$.
\begin{corollary}
    \label{corollary:num_max_sg}
    There must be at least $m(G)$ distinct, maximal subgroups  \\*
    $\{M_1, \ldots, M_{m(G)}\}$ of $G$ with $i(M_j) \geq m(G) - 1$.
\end{corollary}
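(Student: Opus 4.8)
The plan is to start from a witness for $m(G)$ and feed it directly into the preceding proposition. First I would choose an irredundant generating set $s = \{g_1, \ldots, g_{m(G)}\}$ of $G$ of maximal size $m(G)$; such a set exists by the definition of $m(G)$. Applying the preceding proposition to $s$ produces maximal subgroups $K_1, \ldots, K_{m(G)}$ with $K_i \supseteq \{g_1, \ldots, \widehat{g_i}, \ldots, g_{m(G)}\}$, and the collection $K = \{K_1, \ldots, K_{m(G)}\}$ is in general position. These $K_i$ will serve as the desired $M_j$, so it remains only to verify the two assertions of the corollary: that the $K_i$ are distinct, and that each satisfies $i(K_i) \geq m(G) - 1$.

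Distinctness follows immediately from general position. Taking $I = \{i\}$ and $J = \{i,j\}$ with $i \neq j$, the general-position condition gives $K_i = \bigcap_{\ell \in I} K_\ell \supsetneq \bigcap_{\ell \in J} K_\ell = K_i \cap K_j$, so $K_i \cap K_j$ is a proper subgroup of $K_i$ and hence $K_j \neq K_i$. (One can also argue directly: if $K_i = K_j$, then this single maximal subgroup would contain both $\{g_1,\ldots,\widehat{g_i},\ldots,g_{m(G)}\}$ and $\{g_1,\ldots,\widehat{g_j},\ldots,g_{m(G)}\}$, whose union is all of $s$, forcing $K_i = G$ and contradicting maximality.) Either way we obtain $m(G)$ distinct maximal subgroups.

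For the bound on $i(K_i)$ I would invoke the fact recorded earlier that every subset of an irredundant set is itself irredundant. The subset $\{g_1, \ldots, \widehat{g_i}, \ldots, g_{m(G)}\}$ has size $m(G) - 1$, is irredundant as a subset of the irredundant set $s$, and lies entirely inside $K_i$ by construction. Hence $K_i$ contains an irredundant set of size $m(G) - 1$, so $i(K_i) \geq m(G) - 1$ by the definition of $i$. Setting $M_j = K_j$ then completes the argument.

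There is no substantive obstacle here; the content is carried entirely by the preceding proposition together with the stability of irredundancy under passing to subsets. The only point demanding a moment's care is the distinctness of the $K_i$, since general position a priori controls only intersections rather than the subgroups themselves; but, as shown above, the case $|I| = 1$, $|J| = 2$ of the general-position condition is exactly what delivers it.
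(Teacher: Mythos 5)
Your proof is correct and follows exactly the paper's route: the paper derives this corollary from the same preceding proposition, noting that each maximal subgroup $K_i$ contains the irredundant set $\{g_1, \ldots, \widehat{g_i}, \ldots, g_{m(G)}\}$ of size $m(G)-1$, with distinctness guaranteed by general position. You have merely spelled out details the paper leaves implicit, so there is nothing to add.
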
 
This result will be used to give an upper bound on $m$ of groups by examining their maximal subgroups.
Note that this is a stronger result than that given in Proposition~\ref{prop:m_leq_i}.

We now also give some additional results building off of the ideas from MaxDim.
Let $\mbox{Max}(G)$ denote the set of all maximal subgroups of $G$.
Let $\mbox{Allow}(G)$ denote the set of all maximal subgroups of $G$ that occur in a family of maximal subgroups which is in general position and is associated to an irredundant generating set of size $m(G)$.

\begin{proposition}
    \label{prop:MaxDim_ineqs}
    Let $G$ be a finite group. Then either
    \begin{enumerate}
        \item $m(G) = \mbox{MaxDim}(G) \leq 1 + \max\{i(M) | M \in \mbox{Allow}(G) \}$, or
        \item $m(G) < \mbox{MaxDim}(G) \leq \max\{i(M) | M \in \mbox{Max}(G) \}.$
    \end{enumerate}
    Furthermore, if $\max\{i(M) | M \in \mbox{Max}(G) \} \leq m(G)$, then $G$ is flat. \\*
    If $\max\{i(M) | M \in \mbox{Max}(G) \} < m(G)$, then $G$ is strongly flat.
\end{proposition}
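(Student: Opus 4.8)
The plan is to split on the dichotomy supplied by the corollary $m(G) \leq \mbox{MaxDim}(G) \leq i(G)$: either $m(G) = \mbox{MaxDim}(G)$ or $m(G) < \mbox{MaxDim}(G)$, and these correspond exactly to the two listed alternatives. For the first alternative, suppose $m(G) = \mbox{MaxDim}(G)$. By Corollary~\ref{corollary:num_max_sg} there is an irredundant generating set of size $m(G)$ whose associated general-position family consists of $m(G)$ distinct maximal subgroups $M_1, \ldots, M_{m(G)}$ each satisfying $i(M_j) \geq m(G) - 1$. By the definition of $\mbox{Allow}(G)$ these $M_j$ lie in $\mbox{Allow}(G)$, so $\max\{i(M) \mid M \in \mbox{Allow}(G)\} \geq m(G) - 1$, which rearranges to the required $\mbox{MaxDim}(G) = m(G) \leq 1 + \max\{i(M) \mid M \in \mbox{Allow}(G)\}$.

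For the second alternative, suppose $m(G) < \mbox{MaxDim}(G)$ and let $\{M_1, \ldots, M_k\}$ be a general-position family realizing $k = \mbox{MaxDim}(G)$. The key construction is to extract a witnessing irredundant set: applying general position with $I = \{1,\ldots,k\}\setminus\{i\}$ and $J = \{1,\ldots,k\}$ gives $\bigcap_{j\neq i} M_j \supsetneq \bigcap_{j} M_j$, so I may choose $g_i \in \bigcap_{j\neq i} M_j$ with $g_i \notin M_i$. Since each $g_j$ with $j \neq i$ lies in $M_i$, the subset omitting $g_i$ generates a subgroup of $M_i$, which does not contain $g_i$; hence $\{g_1, \ldots, g_k\}$ is irredundant of size $k$. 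The crucial point, and the step that distinguishes this case, is that this set cannot generate $G$: were it generating it would be an irredundant generating set of size $k > m(G)$, contradicting the maximality defining $m(G)$. Therefore $\langle g_1, \ldots, g_k\rangle$ is a proper subgroup, contained in some maximal $M$, and this $M$ then contains an irredundant set of size $k$, so $i(M) \geq k = \mbox{MaxDim}(G)$. This yields $\mbox{MaxDim}(G) \leq \max\{i(M) \mid M \in \mbox{Max}(G)\}$.

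For the furthermore clauses, both follow from comparing $\max\{i(M)\}$ with $m(G)$. If $\max\{i(M) \mid M \in \mbox{Max}(G)\} \leq m(G)$, then Proposition~\ref{prop:i_is_max} gives $i(G) = \max(m(G), \max\{i(M)\}) = m(G)$, so $G$ is flat. If instead $\max\{i(M) \mid M \in \mbox{Max}(G)\} < m(G)$, I invoke the stated characterization that $G$ is strongly flat exactly when $m(H) < m(G)$ for every proper subgroup $H < G$: any such $H$ lies in a maximal subgroup $M$, and monotonicity of $i$ under inclusion gives $m(H) \leq i(H) \leq i(M) \leq \max\{i(M)\} < m(G)$, establishing strong flatness.

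I expect the only real obstacle to be the Case 2 construction -- specifically, the observation that the irredundant set built from an optimal general-position family fails to generate precisely because its size exceeds $m(G)$, which is what upgrades the generic bound $\mbox{MaxDim}(G) \leq i(G)$ to the sharper $\mbox{MaxDim}(G) \leq \max\{i(M)\}$. The remaining verifications (the irredundancy of the extracted set, and the flat and strongly-flat deductions) are routine applications of the definitions and of Proposition~\ref{prop:i_is_max}.
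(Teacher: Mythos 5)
Your proof is correct, and its core construction in the second case --- choosing $g_i \in \bigl(\bigcap_{j\neq i} M_j\bigr) \setminus M_i$ from a general-position family realizing $\mbox{MaxDim}(G)$ and trapping the resulting irredundant set inside a maximal subgroup --- is exactly the paper's. Where you differ is in how the dichotomy is organized. The paper splits on whether the extracted set $s$ generates $G$: if $\langle s \rangle = G$ it concludes alternative 1, otherwise alternative 2. You instead split on $m(G) = \mbox{MaxDim}(G)$ versus $m(G) < \mbox{MaxDim}(G)$, handling the first alternative directly from Corollary~\ref{corollary:num_max_sg} together with the definition of $\mbox{Allow}(G)$ (never touching the extracted set), and invoking the extraction only in the second case, where the strict inequality $k > m(G)$ forces $\langle s \rangle \neq G$ for free. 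Your organization is in fact tighter than the paper's: in the paper's second branch, the failure of one particular witness $s$ to generate yields $\mbox{MaxDim}(G) \leq \max\{i(M) \mid M \in \mbox{Max}(G)\}$ but does not by itself establish the strict inequality $m(G) < \mbox{MaxDim}(G)$ asserted in alternative 2, since a specific choice of the elements $g_j$ could fail to generate even when $m(G) = \mbox{MaxDim}(G)$; your case split makes the two alternatives exactly exclusive, and your corollary-based argument for case 1 covers precisely the configuration the paper leaves implicit. The flatness and strong-flatness deductions match the paper's one-line remark (namely $m(H) \leq i(H) \leq i(M)$ for any proper $H$ contained in a maximal $M$), with your appeal to Proposition~\ref{prop:i_is_max} for flatness an equally valid route.
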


\begin{proof}
    Suppose $F = \{M_1, \ldots, M_k\}$ with $|F| = \mbox{MaxDim}(G) = k$ is in general position.
    Then there is a $g_j \in \bigcap_{i \not= j} M_i$ such that $g_j \not\in \bigcap_{i} M_i$.
    So $s = \{g_1, \ldots, g_k\}$ is an irredundant set.
    If $\left<s\right> = G$, then we have the first case.
    Otherwise, $\left<s\right>$ lies inside a maximal subgroup $M < G$.
    So $\mbox{MaxDim}(G) = k \leq i(M)$ for this $M$, which gives the second case.

    The last two statements follow directly from noting that $m(H) \leq i(M)$, where $H \leq G$ is any proper subgroup of $G$ and $M$ is any maximal subgroup containing $H$.
\end{proof}

\begin{proposition}
    \label{gen_pos_ineqs}
    Let $F = \{H_1, \ldots, H_k\}$ be a family of subgroups in general position.
    For any subset $I \subseteq \{1, \ldots, k\}$, we have
    \[ k \leq i(H_I) + |I| \]
    where $H_I = \bigcap_{j \in I} H_j$.
    In particular, for each integer $\ell$, $1 \leq \ell \leq k$, we have
    \[ k \leq i(H_\ell) + 1. \]
\end{proposition}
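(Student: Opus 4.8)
The plan is to exhibit, for each subset $I \subseteq \{1,\ldots,k\}$, an irredundant set of size $k - |I|$ contained in $H_I$. Since $i(H_I)$ is by definition the largest size of an irredundant set in $H_I$, this would give $i(H_I) \geq k - |I|$, which is exactly the desired inequality $k \leq i(H_I) + |I|$. The elements I would use are the same ``witness'' elements produced in the proof of Proposition~\ref{prop:MaxDim_ineqs}.

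First I would produce the witnesses. For each index $j$, applying the general-position hypothesis to the pair $\{1,\ldots,k\}\setminus\{j\} \subsetneq \{1,\ldots,k\}$ gives $\bigcap_{i \neq j} H_i \supsetneq \bigcap_i H_i$, so I may choose $g_j$ lying in the larger intersection but not in the smaller. Then $g_j \in H_i$ for every $i \neq j$, while $g_j \notin H_j$. Next I would restrict to the indices $j \notin I$: for such a $j$ and any $i \in I$ we have $i \neq j$, hence $g_j \in H_i$, so $g_j \in \bigcap_{i \in I} H_i = H_I$. This places all $k - |I|$ of the elements $\{g_j : j \notin I\}$ inside $H_I$.

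It then remains to check that this set is irredundant. Fixing $j_0 \notin I$, every other $j \notin I$ satisfies $j \neq j_0$ and so $g_j \in H_{j_0}$; thus $\langle g_j : j \notin I,\ j \neq j_0\rangle \leq H_{j_0}$, while $g_{j_0} \notin H_{j_0}$, so $g_{j_0}$ does not lie in the subgroup generated by the others. Because irredundance depends only on the subgroups the elements generate --- which are identical whether computed inside $G$ or inside $H_I$ --- this is a genuine irredundant set of $H_I$, completing the main inequality. The ``in particular'' clause is simply the case $I = \{\ell\}$, for which $|I| = 1$ and $H_I = H_\ell$.

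I do not expect a serious obstacle: the argument is a bookkeeping refinement of the general-position-to-irredundant-set construction already used for $\mbox{MaxDim}(G)$. The one point that needs care is the index bookkeeping --- verifying that restricting to $j \notin I$ is precisely what forces the witnesses into the intersection $H_I$ while simultaneously preserving the containment $\langle g_j : j \neq j_0\rangle \leq H_{j_0}$ that yields irredundance.
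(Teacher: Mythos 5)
Your proposal is correct and is essentially the paper's own argument: the paper shows that $\{H_i \cap H_I \mid i \notin I\}$ is a family of $k-|I|$ proper subgroups of $H_I$ in general position and then extracts an irredundant set of that size exactly as in the proof of Proposition~\ref{prop:MaxDim_ineqs}, and since $\bigcap_{i \notin I,\, i \neq j}(H_i \cap H_I) = \bigcap_{i \neq j} H_i$, the witnesses that construction produces are precisely your elements $g_j$ for $j \notin I$. You have simply inlined the witness construction --- verifying membership in $H_I$ and irredundance (via $g_{j_0} \notin H_{j_0}$) directly rather than routing through general position of the intersected family --- which is the same substance and equally valid.
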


\begin{proof}
    Let $J_1 = I_1 \sqcup I \subseteq \{1, \ldots, k\}$ and $J_2 = I_2 \sqcup I \subseteq \{1, \ldots, k\}$
    be disjiont unions with $J_2 \subset J_1$ a proper containment (so $I_2 \subset I_1$ is a proper containment).
    Then, since $F$ is in general position, $H_{J_1} \subsetneq H_{J_2}$.
    Hence
    \[ \{H_i \cap H_I | i \not\in I \} \]
    is a collection of $k - |I|$ proper subgroups of $H_I$ in general position.
    Thus
    \[ k - |I| \leq i(H_I). \]
\end{proof}

We next give a characterisation of the sets in $G$ that satisfy the replacement property.
Define $\mbox{rad}(F) = \bigcap_i H_i$ for $F = \{H_1, \ldots, H_k\}$ a family of subgroups in general position.
\begin{proposition}
    Let $s$ be an irredundant generating sequence of size $k$ for the finite group $G$.
    Then $s$ satisfies the replacement property for $G$ if and only if $\mbox{rad}(F) = 1$ for every family $F$ of maximal subgroups in general position that is associated to $s$.
\end{proposition}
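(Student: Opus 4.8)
The plan is to reduce the biconditional to a single logical equivalence by reformulating both the replacement property and the radical condition as statements about which maximal subgroups contain a fixed nontrivial element. First I would fix notation: for the generating sequence $s = \{g_1, \ldots, g_k\}$ write $H_i = \langle g_1, \ldots, \widehat{g_i}, \ldots, g_k \rangle$, and let $\mathcal{M}_i$ denote the set of maximal subgroups of $G$ containing $H_i$, which is nonempty since irredundancy forces $H_i$ to be proper. By the earlier proposition showing that any choice of maximal subgroups containing the $(k-1)$-element subsets is automatically in general position, a family of maximal subgroups in general position associated to $s$ is precisely a tuple $F = \{M_1, \ldots, M_k\}$ with $M_i \in \mathcal{M}_i$ for each $i$. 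Hence $\mbox{rad}(F) = \bigcap_i M_i$ ranges over exactly these intersections as $F$ ranges over the associated families.

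The key step is a characterization of a single replacement. Replacing $g_i$ by $g$ produces a set generating $\langle H_i, g \rangle$, so it generates $G$ if and only if $\langle H_i, g \rangle$ lies in no maximal subgroup. Since a maximal subgroup $M$ contains $\langle H_i, g \rangle$ exactly when $H_i \subseteq M$ and $g \in M$, replacing $g_i$ by $g$ generates $G$ if and only if $g \notin M$ for every $M \in \mathcal{M}_i$. Consequently the replacement at index $i$ \emph{fails} for $g$ precisely when $g$ lies in some $M \in \mathcal{M}_i$.

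With this in hand, both sides of the biconditional become negations of the same existential statement, and I would argue the contrapositive in each direction. Suppose some associated family $F = \{M_1, \ldots, M_k\}$ has $\mbox{rad}(F) \ne 1$, and choose a nontrivial $g \in \bigcap_i M_i$. Then $g \in M_i \in \mathcal{M}_i$ for every $i$, so by the key step the replacement fails at every index for this $g$, and $s$ fails the replacement property. Conversely, if $s$ fails the replacement property there is a nontrivial $g$ whose replacement fails at every index; for each $i$ I may then \emph{select} some $M_i \in \mathcal{M}_i$ with $g \in M_i$, and the resulting associated family $F = \{M_1, \ldots, M_k\}$ satisfies $g \in \mbox{rad}(F)$, so $\mbox{rad}(F) \ne 1$.

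I do not expect a serious obstacle, as the argument is a chain of equivalences; the only point requiring care is the selection step in the reverse direction, where passing from ``for each $i$, $g$ lies in \emph{some} maximal subgroup over $H_i$'' to the existence of a single family $F$ with $g \in \mbox{rad}(F)$ relies on choosing one such maximal subgroup per index and on knowing that every such tuple of choices is genuinely an associated family in general position. The latter is exactly what the earlier general-position proposition supplies, so the remaining work is purely bookkeeping.
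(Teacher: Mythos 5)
Your proof is correct. Note that the paper states this proposition without any proof at all, so there is no argument of the author's to compare against; your write-up supplies exactly the missing argument, and it is the natural one given the surrounding machinery. The two pillars you lean on are both sound: (1) the paper's earlier proposition does guarantee that \emph{any} tuple $(M_1,\ldots,M_k)$ with $M_i \supseteq H_i = \langle g_1,\ldots,\widehat{g_i},\ldots,g_k\rangle$ maximal is automatically in general position, so the associated families are precisely such tuples and your selection step in the reverse direction is legitimate; and (2) since $G$ is finite, $\langle H_i, g\rangle = G$ if and only if no maximal subgroup contains both $H_i$ and $g$, which is your key single-replacement characterization. With those two facts, both sides of the biconditional reduce to the negation of ``there exists a nontrivial $g$ lying, for every index $i$, in some member of $\mathcal{M}_i$,'' and your contrapositive argument in each direction is airtight. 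The only cosmetic remark: you could observe that distinctness of the $M_i$ (needed to view $F$ as a $k$-element family) is itself forced, since $M_i = M_j$ for $i \neq j$ would give a maximal subgroup containing $H_i \cup H_j \supseteq s$ and hence all of $G$; but this is also implicit in general position, so nothing is missing.
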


This allows us to get that $G$ satisfies the replacement property from its maximal subgroups.
\begin{proposition}
    Let $G$ be a finite subgroup, $m = m(G)$ and $s = \{g_1, \ldots, g_m\}$ an irredundant generating set of $G$ size $m$.
    Let $F = \{M_1, \ldots, M_m\}$ be an associated family of maximal subgroups in general position.
    Assume that for any such $F$, there exists one of the maximal subgroups, say $M_m$, such that
    \begin{enumerate}
        \item $M_m = \left< g_1, \ldots, g_{m-1} \right>$
        \item $m(M_m) = m - 1$
        \item $M_m$ satisfies the replacement property
    \end{enumerate}
    Then $G$ satisfies the replacement property.
\end{proposition}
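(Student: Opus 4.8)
The plan is to reduce the claim to the preceding characterisation of the replacement property in terms of radicals of general-position families, and then to push the problem down into $M_m$, where the replacement property is already assumed. By that characterisation it is enough to show that $\mbox{rad}(F) = \bigcap_{i=1}^{m} M_i = 1$ for every family $F = \{M_1, \ldots, M_m\}$ of maximal subgroups in general position associated to $s$; establishing this for the given $s$ shows that $s$ satisfies the replacement property, and carrying out the same argument for every size-$m$ irredundant generating set yields that $G$ itself does. So I would fix one associated family $F$ and, invoking the hypothesis, relabel it so that $M_m = \left< g_1, \ldots, g_{m-1} \right>$ with $m(M_m) = m-1$ and $M_m$ satisfying the replacement property.

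First I would record two elementary facts. Since $F$ is associated to $s$, each $M_i$ contains $\left< g_1, \ldots, \widehat{g_i}, \ldots, g_m \right>$ but, being proper, cannot contain $g_i$, for otherwise $M_i$ would equal $G$. Next I would rewrite the radical entirely inside $M_m$: as $\mbox{rad}(F) \subseteq M_m$, we have $\mbox{rad}(F) = M_m \cap \bigcap_{i=1}^{m-1} M_i = \bigcap_{i=1}^{m-1} H_i$, where $H_i = M_m \cap M_i$. Each $H_i$ is a subgroup of $M_m$ containing $\left< g_1, \ldots, \widehat{g_i}, \ldots, g_{m-1} \right>$, and is proper in $M_m$ because $g_i \in M_m$ while $g_i \notin M_i \supseteq H_i$.

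The key step is to realise $\bigcap_i H_i$ as a subgroup of the radical of a general-position family inside $M_m$ attached to the generating set $\{g_1, \ldots, g_{m-1}\}$. For each $i$ I would choose a maximal subgroup $M_i^*$ of $M_m$ with $H_i \subseteq M_i^*$; then $M_i^* \supseteq \left< g_1, \ldots, \widehat{g_i}, \ldots, g_{m-1} \right>$, so $\{M_1^*, \ldots, M_{m-1}^*\}$ is a family of maximal subgroups of $M_m$ associated to the irredundant generating set $\{g_1, \ldots, g_{m-1}\}$ and hence automatically in general position by the earlier proposition on maximal subgroups attached to an irredundant generating set. Now $\{g_1, \ldots, g_{m-1}\}$ is an irredundant generating set of $M_m$ of size $m-1 = m(M_m)$, so the assumption that $M_m$ satisfies the replacement property forces this set to satisfy it; applying the characterisation proposition inside $M_m$ gives $\bigcap_{i=1}^{m-1} M_i^* = 1$. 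Therefore $\mbox{rad}(F) = \bigcap_i H_i \subseteq \bigcap_i M_i^* = 1$, as required.

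The routine verifications — that each $M_i$ omits $g_i$, that each $H_i$ is proper in $M_m$, and that the collapse $\mbox{rad}(F) = \bigcap_i H_i$ is valid — are quick. The step I expect to carry the real content is the transfer of the replacement property from $M_m$ up to $G$: one must check that $\{M_1^*, \ldots, M_{m-1}^*\}$ genuinely counts as a family \emph{associated to} $\{g_1, \ldots, g_{m-1}\}$ so that the characterisation proposition applies to $M_m$ verbatim, and one must track the quantifiers with care, since the hypothesis is phrased for every general-position family $F$ attached to the fixed $s$ and must be invoked across all size-$m$ irredundant generating sets before one may conclude that $G$ as a whole satisfies the replacement property.
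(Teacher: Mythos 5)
Your proposal is correct and follows essentially the same route as the paper's proof: intersect each $M_i$ with $M_m$, enlarge each intersection $M_m \cap M_i$ (which properly contains $\left< g_1, \ldots, \widehat{g_i}, \ldots, g_{m-1}\right>$ and is proper in $M_m$) to a maximal subgroup of $M_m$, observe that the resulting family is in general position and associated to the irredundant generating set $\{g_1, \ldots, g_{m-1}\}$ of size $m(M_m)$, and invoke the radical characterisation of the replacement property in $M_m$ to get $\mbox{rad}(F) \leq \bigcap_i M_i^* = 1$. Your explicit attention to the quantifiers (ranging over all size-$m$ sets $s$ and all associated families $F$) is in fact slightly more careful than the paper's write-up, which leaves that point implicit.
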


\begin{proof}
    Note that for $j \not= m$ we have
    \begin{enumerate}[(a)]
        \item $M_m \cap M_j \geq \left< g_1, \ldots, \widehat{g_j}, \ldots, \widehat{g_m}, \ldots, g_k\right>$
        \item $M_m \cap M_j \not= M_m$ since $F$ is in general position
        \item Thus there exists $N_j \in \mbox{Max}(M_m)$ with $N_j \geq M_m \cap M_j$
        \item Hence $F' = \{N_1, \ldots, N_{m-1}\}$ is a family of maximal subgroups of $M_m$ in general position associated to the irredundant generating set $s' = \{g_1, \ldots, g_{m-1}\}$.
        \item Since $M_m$ satisfies the replcament property, we have 
            \[\mbox{rad}(F') = N_1 \cap \cdots \cap N_{m-1} = 1 \]
    \end{enumerate}
    Thus
    \begin{align*}
        \mbox{rad}(F') &= N_1 \cap \cdots \cap N_{m_1} \\
                     &\geq (M_m \cap M_1) \cap \cdots \cap (M_m \cap M_{m-1}) \\
                     &= M_1 \cap M_2 \cap \cdots \cap M_{m-1} \cap M_m \\
                     &= \mbox{rad}(F).
    \end{align*}
    and since $\mbox{rad}(F') = 1$ we have $\mbox{rad}(F) = 1$ as well.
\end{proof}

\subsection{Computed Upper Bounds}

\begin{proposition}
    \label{prop:upper_bound}
    $m(M_{11}) \leq 5$ and $m(M_{12}) \leq 6$.
\end{proposition}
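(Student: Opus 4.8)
The plan is to bound $i(M)$ for every maximal subgroup $M$ of the group in question and then invoke Proposition~\ref{prop:m_leq_i}, which gives $m(G) \leq \max_M i(M) + 1$. For $M_{11}$ I want to show that $\max_M i(M) = 4$, so that $m(M_{11}) \leq 5$, and for $M_{12}$ that $\max_M i(M) = 5$, so that $m(M_{12}) \leq 6$. Together with the explicit sets of Proposition~\ref{prop:lower_bound} this pins both values down exactly. First I would list the maximal subgroups from the ATLAS: for $M_{11}$ they are $M_{10} \cong A_6.2$, $PSL(2,11)$, $S_5$, and the two solvable groups $3^2{:}Q_8.2$ and $GL(2,3)\cong 2.S_4$; for $M_{12}$ they are two classes of $M_{11}$, $A_6.2^2$, $PSL(2,11)$, $2\times S_5$, and the solvable groups $3^2{:}2.S_4$, $2^{1+4}{:}S_3$, $4^2{:}D_{12}$, and $A_4\times S_3$.

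For the solvable maximal subgroups I would compute $m$ directly from the Collins--Dennis chief-series theorem and then obtain $i$ by applying Proposition~\ref{prop:i_is_max} recursively; since every subgroup of a solvable group is solvable, this terminates after finitely many chief-series computations, and in each case the value stays at most $4$ (for $M_{11}$) or $5$ (for $M_{12}$). The subgroups covered by Whiston's theorem are immediate: $i(S_5)=4$, and by the Collins--Dennis direct-product theorem $i(2\times S_5)=i(\Z_2)+i(S_5)=5$. This leaves the genuinely recursive non-solvable cases $PSL(2,11)$, $M_{10}$, $A_6.2^2$, and (inside $M_{12}$) $M_{11}$ itself, each of which I would treat by repeating the maximal-subgroup analysis one level down.

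For $PSL(2,11)$ the maximal subgroups are the two classes of $A_5$ (with $i(A_5)=3$), the Frobenius group $11{:}5$, and dihedral groups, all of $i\leq 3$; hence $\max_M i(M)\leq 3$, so $m(PSL(2,11))\leq 4$ by Proposition~\ref{prop:m_leq_i}, and $i(PSL(2,11))\leq 4$. The delicate cases are $M_{10}$ and $A_6.2^2$, and this is where I expect the main obstacle. Here Proposition~\ref{prop:i_is_max} gives $i(G)=\max(m(G),\max_M i(M))$, and while the second term is under control --- $A_6$ is the unique maximal subgroup of $M_{10}$ with $i=4$, and $S_6$ the unique maximal subgroup of $A_6.2^2$ with $i=5$, every other maximal subgroup being strictly smaller --- the term $m(G)$ is \emph{not} directly bounded by these values. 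The resolution is Corollary~\ref{corollary:num_max_sg}: if $m(M_{10})=5$ there would have to exist five distinct maximal subgroups of $M_{10}$ with $i\geq 4$, but $A_6$ (the unique index-$2$ subgroup) is the only one, forcing $m(M_{10})\leq 4$ and hence $i(M_{10})=4$. The identical counting argument, using that $S_6$ is the unique $S_6$-type index-$2$ subgroup of $A_6.2^2$, gives $m(A_6.2^2)\leq 5$ and $i(A_6.2^2)=5$.

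Finally, for the copies of $M_{11}$ inside $M_{12}$, the $M_{11}$ analysis already shows every maximal subgroup has $i\leq 4$ and $m(M_{11})=5$, so $i(M_{11})=\max(5,4)=5$ by Proposition~\ref{prop:i_is_max}; note the $M_{11}$ bound must be established before the $M_{12}$ bound, which is legitimate and non-circular. Assembling the pieces, every maximal subgroup of $M_{11}$ has $i\leq 4$ and every maximal subgroup of $M_{12}$ has $i\leq 5$, and Proposition~\ref{prop:m_leq_i} delivers $m(M_{11})\leq 5$ and $m(M_{12})\leq 6$. The points requiring the most care are obtaining the complete lists of maximal subgroups (so that no large-$i$ subgroup is overlooked) and verifying the uniqueness claims on which the Corollary~\ref{corollary:num_max_sg} counting arguments for $M_{10}$ and $A_6.2^2$ depend.
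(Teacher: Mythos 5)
Your proposal is correct and follows essentially the same route as the paper: bound $i(M)$ for every maximal subgroup (Collins--Dennis for the solvable ones, Whiston and the direct-product theorem where applicable, recursion into $M_{10}$, $\mbox{Aut}(S_6)$, and $M_{11}$), use the uniqueness of $A_6 \leq M_{10}$ and $S_6 \leq \mbox{Aut}(S_6)$ together with Corollary~\ref{corollary:num_max_sg} to control $m$ in the two delicate cases, and conclude via the $\max i(M) + 1$ bound. The only divergence is immaterial: the paper cites Nachman's exhaustive computation for $m(PSL(2,11)) = i(PSL(2,11)) = 4$, whereas you re-derive the needed bound $i(PSL(2,11)) \leq 4$ from its maximal subgroups, which is equally valid for the upper-bound argument.
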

\begin{proof}
 Computations based upon applying Corollary~\ref{corollary:num_max_sg}.
\end{proof}
Tables~\ref{tab:M11} and~\ref{tab:M12} list the maximal subgroups of $M_{11}$ and $M_{12}$, respectively, and give the computed values for $i$ and $m$, when available.
In particular, in $M_{12}$ we have three maximal subgroups that are not solvable, one being $M_{11}$, one other $S_6 \rtimes \Z_2 \approx \mbox{Aut}(S_6)$, and the last being $PSL(2,11)$.
In $M_{11}$, the maximal subgroup $M_{10} = A_6 \rtimes \Z_2$ also arises in $\mbox{Aut}(S_6)$ and so is included in the calculation for $M_{12}$.
For $PSL(2,11)$, the result $m=i=4$ has been calculated by exhaustive searches of generating sets.\cite{NACHMAN}

\begin{table}
    \centering
    \begin{tabular}{r|c|c|c|c}
        Maximal Subgroup & $m$ & $i$ & Solvable & ID\\
        \hline
        $M_{10}$ &   & 4 & No & [720, 765]\\
        $PSL(2,11)$ & 4 & 4 & No & [660,13]\\
        $S_5$ & 4 & 4 & No & [120,34]\\
        $M_9 \rtimes \Z_2$ &  & 4 & Yes & [144,182]\\
        $Q_8 \rtimes S_3 \approx GL(2,3)$ &  & 3 & Yes & [48,29]
    \end{tabular}
    \caption{Maximal subgroups of $M_{11}$.
             Values for $m$ are listed when available but only $i$ was necessary for the computation so not all were computed.
             The ID of a group refers to its `small group ID' given by the GAP programming environment.
            This is a unique classification of the isomorphism class of a group for most groups of order less than 2000.
            }
    \label{tab:M11}
\end{table}

\begin{table}
    \centering
    \begin{tabular}{r|c|c|c|c|c}
        Maximal Subgroup & $m$ & $i$ & Count & Solvable & ID\\
        \hline
        $M_{11}$ &  5 & 5 & 24 & No & - \\
        $\mbox{Aut}(S_6)$ &  & 5 & 132 & No & [1440, 5841] \\
        $PSL(2,11)$ & 4 & 4 & 144 & No & [660, 13]\\
        $\Z_3^2 \rtimes (2 . S_3)$ &  & 4 & 440 & Yes & [432, 734] \\
        $S_5 \times \Z_2$ & 5 & 5 & 396 & No & [240,189] \\
        $Q_8 \rtimes S_4$ &  & 4 & 495 & Yes & [192,1494]  \\
        $\Z_4^2 \rtimes (\Z_2 \times S_3)$ &  & 4 & 495 &  Yes & [192,956] \\
        $A_4 \times S_3$ & 4 & 4 & 1320 & Yes & [72, 44]\\
    \end{tabular}
    \caption{Maximal subgroups of $M_{12}$.
             Count gives the number of maximal subgroups in the isomorphism class.
             $2. S_3$ denotes the double cover of $S_3$.
            }
    \label{tab:M12}
\end{table}

\begin{table}
    \centering
    \begin{tabular}{r|c|c|c|c}
        Maximal Subgroup & $m$ & $i$ & Count & ID \\
        \hline
        $(\Z_2 \times D_8) \rtimes \Z_2$ & 3 & 3 & 45 & [32,43]\\
        $\Z_2 \times (\Z_5 \rtimes \Z_4)$ & 3 & 3 & 36 & [40,12]\\
        $((\Z_3 \times \Z_3) \rtimes \Z_8) \rtimes \Z_2$ & 3 & 4 & 10 & [144,182]\\
        $M_{10}$ & 4 & 4 & 1 & [720,763]\\
        $A_6 \rtimes \Z_2$ & 4 & 4 & 1 & [720,765]\\
        $S_6$ & 5 & 5 & 1 & [720,764]\\
    \end{tabular}
    \caption{Maximal subgroups of $\mbox{Aut}(S_6)$.}
    \label{tab:AutS6}
\end{table}

\begin{table}
    \centering
    \begin{tabular}{r|c|c|c|c}
        Maximal Subgroup & $m$ & $i$ & Count & ID\\
        \hline
        $D_{16}$ & 2 & 2 & 45 & [16,7]\\
        $D_{20}$ & 2 & 2 & 36 & [20,4]\\
        $\Z_3^2 \rtimes \Z_8$ & 2 & 3 & 10 & [72,39] \\
        $A_6$ & 4 & 4 & 1 & [360,118]\\
    \end{tabular}
    \caption{Maximal subgroups of $A_6 \rtimes \Z_2$.}
    \label{tab:A6rtimesZ2}
\end{table}

\begin{table}
    \centering
    \begin{tabular}{r|c|c|c|c}
        Maximal Subgroup & $m$ & $i$ & Count & ID\\
        \hline
        $QD_{16}$ & 2 & 2 & 45 & [16,8] \\
        $\Z_5 \rtimes \Z_4$ & 2 & 2 & 36 & [20,3] \\
        $M_9$ & 3 & 3 & 10 & [72,41] \\
        $A_6$ & 4 & 4 & 1 & [360,118] \\
    \end{tabular}
    \caption{Maximal subgroups of $M_{10}$.}
    \label{tab:M10}
\end{table}

For the computation of $\mbox{Aut}(S_6)$, first note that $i(\mbox{Aut}(S_6)) \geq 5$ since $S_6 \leq \mbox{Aut}(S_6)$.
Furthermore, no other maximal subgroup of $\mbox{Aut}(S_6)$ has an $i$ value greater than 4.
Since there is only one subgroup of $\mbox{Aut}(S_6)$ isomorphic to $S_6$, Corollary~\ref{corollary:num_max_sg} gives us that $m(G)$ must be at most 5.
We then get that $i(\mbox{Aut}(S_6)) = 5$ by Proposition~\ref{prop:i_is_max}.

The two maximal subgroups $M_{10}$ and $A_6 \rtimes \Z_2$ are also computed using this technique of listing maximal subgroups.
Their results are listed in Tables~\ref{tab:A6rtimesZ2} and~\ref{tab:M10}.
By the same argument as for $\mbox{Aut}(S_6)$, we get $i(M_{10}) = 4$ and $i(A_6 \rtimes \Z_2) = 4$ as there is only one maximal subgroup in each with $i$ at least $4$.

Applying Proposition~\ref{prop:i_is_max} to this result then gives a slightly stronger result.
\begin{corollary}
    \label{corollary:upper_bound_i}
    $i(M_{11}) \leq 5$ and $i(M_{12}) \leq 6$.
\end{corollary}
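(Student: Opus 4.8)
The plan is to invoke Proposition~\ref{prop:i_is_max}, which expresses $i(G)$ as $\max\bigl(m(G),\,\max(i(M))\bigr)$ where the inner maximum runs over all maximal subgroups $M \leq G$. Since Proposition~\ref{prop:upper_bound} already supplies the bounds $m(M_{11}) \leq 5$ and $m(M_{12}) \leq 6$, it remains only to bound $\max(i(M))$ over the maximal subgroups of each group, and for this I would read the relevant values directly off Tables~\ref{tab:M11} and~\ref{tab:M12}.

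For $M_{11}$, Table~\ref{tab:M11} records $i$-values of $4,4,4,4,3$ across its five isomorphism classes of maximal subgroups, so $\max(i(M)) = 4$. Combining this with $m(M_{11}) \leq 5$ through Proposition~\ref{prop:i_is_max} yields
\[ i(M_{11}) = \max\bigl(m(M_{11}),\,4\bigr) \leq \max(5,4) = 5. \]
For $M_{12}$, Table~\ref{tab:M12} lists $i$-values of $5,5,4,4,5,4,4,4$, so $\max(i(M)) = 5$, and together with $m(M_{12}) \leq 6$ this gives
\[ i(M_{12}) = \max\bigl(m(M_{12}),\,5\bigr) \leq \max(6,5) = 6, \]
as claimed.

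I do not expect a genuine obstacle in the corollary itself: once the tabulated $i$-values and the $m$-bounds are in hand, the argument is a one-line application of Proposition~\ref{prop:i_is_max}. The real content sits upstream in the entries of the tables, where each non-solvable maximal subgroup (such as $\mbox{Aut}(S_6)$, treated earlier via Corollary~\ref{corollary:num_max_sg}, and $PSL(2,11)$, cited from prior work) had its $i$-value pinned down, and the solvable ones were handled by the chief-series theorem. The only point worth stating carefully is that because $\max(i(M)) = 4 < 5 = m(M_{11})$ and $\max(i(M)) = 5 < 6 = m(M_{12})$, the maximum in Proposition~\ref{prop:i_is_max} is in each case achieved by $m(G)$ rather than by a maximal subgroup; this is also precisely the flatness (indeed strong flatness) criterion in Proposition~\ref{prop:MaxDim_ineqs}, so the corollary simultaneously records that $M_{11}$ and $M_{12}$ are flat with $i(M_{11}) = 5$ and $i(M_{12}) = 6$.
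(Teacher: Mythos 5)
Your proposal is correct and follows essentially the same route as the paper, which obtains the corollary precisely by applying Proposition~\ref{prop:i_is_max} to the bounds of Proposition~\ref{prop:upper_bound} together with the maximal-subgroup $i$-values tabulated in Tables~\ref{tab:M11} and~\ref{tab:M12} (with $\max(i(M)) = 4$ for $M_{11}$ and $\max(i(M)) = 5$ for $M_{12}$, the latter using $i(M_{11}) = 5$, which your ordering of the two cases legitimately supplies). Your closing remark that the strict inequalities $\max(i(M)) < m(G)$ yield strong flatness via Proposition~\ref{prop:MaxDim_ineqs} is also consistent with the paper's concluding theorem.
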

\section{Conclusion}

Computational results gave, in Proposition~\ref{prop:lower_bound}, explicit irredundant generating sets of $M_{11}$ and $M_{12}$ of sizes 5 and 6, respectively.
Consideration of the maximal subgroups of $M_{11}$ and $M_{12}$ give, in Corollary~\ref{corollary:upper_bound_i}, upper bounds of 5 and 6, respectively, on both $m$ and $i$ by computing $i(M)$ for each maximal subgroup of $M$.
Together these give the full result.
\begin{theorem}
    We have that $m(M_{11}) = i(M_{11}) = 5$ and $m(M_{12}) = i(M_{12}) = 6$.
    Moreover, $M_{11}$ and $M_{12}$ are both strongly flat.
\end{theorem}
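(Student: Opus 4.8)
The plan is to assemble the numerical bounds already proved and then read off flatness directly from the tables of maximal subgroups; there is no new construction required, only careful bookkeeping. First I would pin down all four invariants by sandwiching. The lower bounds of Proposition~\ref{prop:lower_bound} give $m(M_{11}) \geq 5$ and $m(M_{12}) \geq 6$. Combining these with the universal inequality $m(G) \leq i(G)$ and the upper bounds of Corollary~\ref{corollary:upper_bound_i}, namely $i(M_{11}) \leq 5$ and $i(M_{12}) \leq 6$, yields
\[ 5 \leq m(M_{11}) \leq i(M_{11}) \leq 5 \qquad \text{and} \qquad 6 \leq m(M_{12}) \leq i(M_{12}) \leq 6, \]
which forces $m(M_{11}) = i(M_{11}) = 5$ and $m(M_{12}) = i(M_{12}) = 6$. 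Flatness of both groups is then immediate, since $m = i$ holds for each by the definition of \emph{flat}.

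For strong flatness I would invoke the final clause of Proposition~\ref{prop:MaxDim_ineqs}, which guarantees that $G$ is strongly flat as soon as $\max\{i(M) : M \in \mbox{Max}(G)\} < m(G)$. It therefore suffices to verify that every maximal subgroup has $i$-value strictly smaller than that of the ambient group. For $M_{11}$, Table~\ref{tab:M11} shows that the largest value of $i(M)$ over maximal subgroups is $4$, which is strictly below $5 = m(M_{11})$. For $M_{12}$, Table~\ref{tab:M12} shows the largest value of $i(M)$ is $5$, attained by $M_{11}$, $\mbox{Aut}(S_6)$, and $S_5 \times \Z_2$, and this is still strictly below $6 = m(M_{12})$. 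Hence both groups satisfy the strong-flatness criterion.

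The only genuine content beyond this assembly is the strong-flatness step, and its difficulty is entirely front-loaded into the computed $i$-values of the tables: the argument is valid precisely because we possess the \emph{complete} list of maximal subgroups together with $i(M)$ for each, so that the relevant maximum is actually known rather than merely estimated from above. The delicate case is $M_{12}$, where the margin is a single unit—the maximum $i(M)$ equals $5$ while $m(M_{12}) = 6$—so the conclusion rests squarely on having already established $i(M_{11}) = i(\mbox{Aut}(S_6)) = i(S_5 \times \Z_2) = 5$, i.e. that none of these three maximal subgroups admits an irredundant set of size $6$. I expect this to be the main (and only) point where one must be careful not to overclaim, since omitting or undercomputing any maximal subgroup's $i$-value would invalidate the strict inequality.
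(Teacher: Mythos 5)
Your proposal is correct and matches the paper's argument: the values follow by sandwiching the lower bounds of Proposition~\ref{prop:lower_bound} against the upper bounds of Corollary~\ref{corollary:upper_bound_i}, and strong flatness follows from the final clause of Proposition~\ref{prop:MaxDim_ineqs} together with the tabulated maxima $\max i(M) = 4 < 5$ for $M_{11}$ and $\max i(M) = 5 < 6$ for $M_{12}$. Your emphasis on needing the \emph{complete} list of maximal subgroups with exact $i$-values, especially the one-unit margin in the $M_{12}$ case, is exactly the right point of care and is precisely what the paper's tables supply.
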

By noting which conjugacy classes the given generating elements are from, we get another result.
\begin{proposition}
    We have that $m(M_{11},2A) = i(M_{11},2A) = 5$ and $m(M_{12}, 2B) = i(M_{12}, 2B) = 6$.
\end{proposition}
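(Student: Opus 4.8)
The plan is to sandwich each of the four quantities between the lower bounds already exhibited and the upper bounds coming from the main theorem, so that the proposition falls out as a corollary requiring essentially no new computation. All the substantive work has already been done; what remains is to chain the inequalities correctly.

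First I would record the lower bounds. The two explicit irredundant generating sets displayed in the Computed Sequences section consist entirely of order-2 elements, and it was observed there that all five generators of the $M_{11}$ set lie in the single class $2A$, while all six generators of the $M_{12}$ set lie in the class $2B$. Since each is an irredundant generating set whose members all belong to the relevant class, they witness $m(M_{11},2A) \geq 5$ and $m(M_{12},2B) \geq 6$ directly from the definition of $m(G,C)$.

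Next I would assemble the upper bounds from the chain
\[ m(G,C) \leq i(G,C) \leq i(G), \]
where the first inequality holds because a generating irredundant set drawn from $C$ is in particular an irredundant set drawn from $C$, and the second is the inequality $i(G,C) \leq i(G)$ noted when these refined invariants were introduced. Applying the main theorem, which gives $i(M_{11}) = 5$ and $i(M_{12}) = 6$, yields $i(M_{11},2A) \leq 5$ and $i(M_{12},2B) \leq 6$. Combining the two halves gives
\[ 5 \leq m(M_{11},2A) \leq i(M_{11},2A) \leq i(M_{11}) = 5 \]
and
\[ 6 \leq m(M_{12},2B) \leq i(M_{12},2B) \leq i(M_{12}) = 6, \]
forcing equality throughout.

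There is no genuine obstacle remaining at this stage, since all the difficulty has been absorbed into the lower-bound computation (verifying that the displayed generators are irredundant, generate, and lie in a single class) and into the upper-bound argument of the main theorem. The only point that still requires care is confirming the conjugacy-class membership of the displayed generators, i.e.\ that they genuinely lie in $2A$ and $2B$ respectively rather than being split among several classes of involutions; this is a finite check carried out against the class data in GAP, and it was already asserted in the Computed Sequences section. Granting that assertion, the proposition is immediate from the two displays above.
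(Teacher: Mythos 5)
Your argument is exactly the paper's: the paper derives this proposition by noting the conjugacy classes of the displayed order-2 generators (all in $2A$ for $M_{11}$, all in $2B$ for $M_{12}$), which give the lower bounds, and then squeezing against $m(G,C) \leq i(G,C) \leq i(G)$ together with $i(M_{11}) = 5$ and $i(M_{12}) = 6$ from the main theorem. Your write-up is correct and, if anything, makes the sandwich explicit where the paper leaves it implicit.
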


The same techniques used here should extend to the remaining Mathieu groups, but the increase in size of these groups has so far made computations infeasible.
The next Mathieu group, $M_{22}$, has order 443,520 making it almost 5 times as large as $M_{12}$.
More optimistically, $M_{22}$ only has 1,150 elements of order 2 which is comparable to the 891 elements in $M_{12}$ of order 2, as was done for $M_{11}$ and $M_{12}$.
It may then be possible to find a maximal size irredundant generating set of $M_{22}$ composed of order 2 elements.

We suspect that $m(M_{22}) = 6$, $m(M_{23}) = 7$ and $m(M_{24}) = 8$ by considerations of their maximal subgroups and hope to be able to carry out the full computation on those groups as well.

\section{Acknowledgments}

Professor Keith Dennis was the primary driving force behind these ideas and many of the computations and provided invaluable feedback.
\pagebreak

\appendix
\section{Appendix - Code}
The following is the GAP code used for the calculation of irredundant generating sets.

\begin{verbatim}
IsIrredundant := function(G,seq)
    # Test whether a given sequence is irredundant 
    # and generates G
    local irred;
    irred := IsIrredundantNC(G,seq);
    if irred and Size(Subgroup(G,seq)) = Size(G) then
        return true;
    fi;
    return false;
end;

IsIrredundantNC := function(G,seq)
  # Test whether a given sequence is irredundant
  # G is the group the sequence is from 
  # This assumes that seq generates G
  local l,sg,i,new;
  l := Length(seq);
  sg := Size(G);

  for i in [1..l] do
    new := ShallowCopy(seq);
    Remove(new, i);
    if(Size(Subgroup(G,new))=sg) then
      return false;
    fi;
  od;
  return true;
end;

IdSubSeq:=function(G,seq)
    # Prints a nice description of the given sequence
    # IDs and descriptions of groups generated
    # by the length l-1 subsequences
    local l,sg,i,new,h;
    l:=Length(seq);
    sg:=Size(G);

    for i in [1..l] do
        new:=ShallowCopy(seq);
        Remove(new,i);
        h:=Subgroup(G,new);
        Print(SafeIdGroup(h),"\t   ",NiceStructureDescription(h),"\n");
    od;
end;

TarskiExtTwo:=function(G,seq)
    local ll,ord,cc,two,x,a,b,new;
    # assumes seq generates G
    # quits when it finds one
    # return true if found, false if not found
    # structure of sequence:
    # seq = (g1,g3,...,g{m-1})
    # o(g{m-1}) in dih[k]; o(gi) = 2 for i < m-1
    # ONLY factor last term into a product of
    # elements of order 2
    ll:=Length(seq);
    ord:=2;
    cc:=ConjugacyClasses(G);
    cc:=Filtered(cc,x->(ord=Order(Representative(x))));
    two:=Flat(List(cc,Elements));
    x:=seq[ll];
    for a in two do
        new:=ShallowCopy(seq);
        Remove(new,ll);
        b:=a*x;
        if(Order(b)=2) then
            Append(new,[a,b]);
            if(IsIrredundantNC(G,new)) then
                Print(CleanOut(new),"\n");
                return true;
            fi;
        fi;
    od;
    return false;
end;

TestMathieuTarskiNew:=function(k,n,rand)
    # Attempt to find length k+1 irredundant generating sequences
    # for the n-th Mathieu group
    # rand=true to try sequences in a random order
    local ord,g,cc,cd,c,two,ll,seq,gseq,test,i,h,possible,cblln,time0,time,
      status,dih,DIH,ggseq,dd,line1,line2,three;

    time0:=Runtime();
    possible:=[11,12,22,23,24];
    if(not k in possible) then
        Print(k," must be in ",possible,"\n");
        return " ";
    fi;

    line1:= "------------------------------------------------";
    line2:= "================================================";

    g:=MathieuGroup(k);
    cd:=ConjugacyClasses(g);
    # List of possible 'dihedral' orders in each Mathieu group
    dih:=[];
    dih[11]:=[2,3,4,5,6];
    dih[12]:=[2,3,4,5,6,8,10];
    dih[22]:=[2,3,4,5,6];
    dih[23]:=[2,3,4,5,6];
    dih[24]:=[2,3,4,5,6,8,10,11,12];
    # find elements of "dihedral" order
    cd:=Filtered(cd,x->(Order(Representative(x)) in dih[k]));
    cc:=ShallowCopy(cd);
    DIH:=Flat(List(cd,Elements));
    # find elements of order 2
    cc:=Filtered(cc,x->(2=Order(Representative(x))));
    two:=Flat(List(cc,Elements));
    # find elements of order 3
    cc:=ShallowCopy(cd);
    cc:=Filtered(cc,x->(3=Order(Representative(x))));
    three:=Flat(List(cc,Elements));
    if(rand) then
        two:=FisherYates(two);
    fi;
    ll:=Size(two);
    cblln:=Commify(Binomial(ll,n-1));
    Print(k,"  ",ll,"  ",cblln,"\n");
    seq:=InitComb(ll,n-1);
    test:=true;

    # Cycle through length n-1 sequences
    # If they are
    while(test) do
        if(0=seq[n] mod 5000) then
            time:=FormatTime(Runtime()-time0);
            Print(RJust(15,Commify(seq[n])),
                    "/",cblln,"  M",k,"  ",time,"\n");
        fi;

        # Construct our new sequence of n-1 order 2 elements
        gseq:=[];
        for i in [1..n-1] do
            Append(gseq,[two[seq[i]]]);
        od;

        # Append a new element of 'dihedral' order
        for dd in DIH do
        # It can often work to just use order 3 elements
        #for dd in three do
            ggseq:=ShallowCopy(gseq);
            Append(ggseq,[dd]);
            # Test to see whether it is irred and generating
            if(IsIrredundant(g,ggseq)) then
                Print("Found irr gen seq of length ",n,":\n");
                Print(CleanOut(gseq),"\n");
                Print(CleanOut(ggseq),"\n");
                Print("is irredundant!\n");
                Print("ord dih element = ",Order(ggseq[n]),"\n");

                # Print sequence info
                IdSubSeq(g,ggseq);

                # insert Tarski extension
                Print("Applying Tarski extension test:\n");
                status:=TarskiExtTwo(g,ggseq);
                test:=not status;
                if(test) then
                    Print("Nothing found, continuing ...\n");
                fi;
                Print(line1,"\n");
            fi;
        od;

        ## end, append "dihedral" element
        if(seq[n]=seq[n+1]) then
            test:=false;
            break;
        fi;
        seq:=NextCombination(ll,n-1,seq);
    od;
    if(not test) then
        Print(line2,"\n");
        Print("Found irr gen seq of length ",1+n," for M",k,"\n");
        Print(line2,"\n");
    fi;
end;

DihedralInMathieu:=function(k)
    # Determines for which n there are dihedral groups of order 2n
    # in the Mathieu group M_k
    local g,ord,div,d,ll,possible;
    possible:=[11,12,22,23,24,25];
    if(not k in possible) then
        Print(k," must be in ",possible,"\n");
        return " ";
    fi;
    g:=MathieuGroup(k);
    ord:=Size(g);
    Print(ord,"  ",CleanOut(Factors(ord)),"\n");
    div:=Divisors(ord);
    for d in div do
        if(0 = d mod 2) then
            ll:=IsomorphicSubgroups(g,DihedralGroup(d));
            # Print(d,"\n");
            if(Size(ll)>0) then
                Print("yes:  ",d,"\n");
            fi;
       fi;
   od;
end;
\end{verbatim}

\section{Appendix - Utility Code}
The following are several utility scripts used by the other code to assist in things such as output.
\begin{verbatim}
Commify:=function(n)
    local quo,cnt,out,rem;
    if(not n in Integers) then
        Print(n," is not an integer\n");
        break;
    fi;
    quo:=n;
    cnt:=0;
    out:=[];
    while(true) do
        rem:=quo mod 10;
        out:=Concatenation(String(rem),out);
        quo:=(quo-rem)/10;
        cnt:=1+cnt;
        if(quo=0) then
            return out;
        fi;
        if(0=cnt mod 3) then
            out:=Concatenation(",",out);
        fi;
    od;
end;

SafeIdGroup:=function(g)
    local  n,gid,spc;
    # don't crash GAP if group can't be identified
    n:=Size(g);
    if(n=1024 or n=512 or n=1536) then
        return false;
    fi;
    # not sure if IdGroup can handle anything if |G|>2000
    if(n>2000) then
        return false;
    fi;
    if(SMALL_AVAILABLE(n) <> fail) then
        gid:=String(IdGroup(g));
        spc:=String(' ');
        # remove spaces
        RemoveCharacters(gid,spc);
        return gid;
    else
        return false;
    fi;
end;

CleanOut:=function(x)
    local str,spc,i,tmp,sqt,dqt;
    # remove spaces, ' and " from output
    str:=String(x);
    spc:=String(' ');
    sqt:=String("'");
    dqt:=String('"');
    # remove spaces
    RemoveCharacters(str,spc);
    # remove single quotes
    RemoveCharacters(str,sqt);
    # remove double quotes
    RemoveCharacters(str,dqt);
    return str; 
end;

NiceStructureDescription:=function(g)
    local str,spc;
    # remove spaces from StructureDescription
    str:=String(StructureDescription(g));
    spc:=String(' ');
    # remove spaces
    RemoveCharacters(str,spc);
    return str;
end;

RJust:=function(n,x)
    local str,len,pad,spc,add,i,tmp,sqt;
    # right justify to fill n characters when printing
    str:=String(x);
    spc:=String(" ");
    sqt:=String("'");
    spc:=List(spc,String);
    len:=Length(str);
    pad:=ShallowCopy(spc[1]);
    add:=n-len-1;
    if(n>len) then
       for i in [1..add] do
           tmp:=ShallowCopy(spc[1]);
           Append(pad,tmp);
       od;
       # remove single quotes
       RemoveCharacters(pad,sqt);
       Append(pad,str);
       str:=pad;    
    fi;
    return str; 
end;

FormatTime := function(time)
    local time2;
    time2 := "";
    time2:=Concatenation(time2,RJust(3,
                String(QuoInt(time,24*60*60*1000))),"d");
    time:=time - QuoInt(time,24*60*60*1000)*24*60*60*1000;
    time2:=Concatenation(time2,RJust(2,
                String(QuoInt(time,60*60*1000))),"h");
    time:=time - QuoInt(time,60*60*1000)*60*60*1000;
    time2:=Concatenation(time2,RJust(2,
                String(QuoInt(time,60*1000))),"m");
    time:=time - QuoInt(time,60*1000)*60*1000;
    time2:=Concatenation(time2,RJust(2,
                String(QuoInt(time,1000))),"s");
    time:=time - QuoInt(time,1000)*1000;
    time2:=Concatenation(time2,RJust(3,String(time)),"ms");
    return time2;
end;

## returns the list of numbers that are divisors of n
Divisors := n -> Filtered([1..n],i->(n mod i) = 0);

#----------------------------------------------------------------------
# The following functions are used for iterating through
# possible length n sequences

InitComb:=function(m,n)
    local B,seq;
    B:=Binomial(m,n);
    seq:=[1..n];
    Append(seq,[1]);
    Append(seq,[B]);
    return seq;
end;

NextCombination := function(m,n,seq)
    local i,j;
    if(seq[n+1]<=seq[n+2]) then    
        seq[n+1]:=1+seq[n+1];
        for i in [n,n-1..1] do
            if(seq[i]<m-(n-i)) then
                seq[i]:=1+seq[i];
                for j in [i+1..n] do
                    seq[j]:=1+seq[j-1];
                od;
                return seq;
            fi;
        od;
    fi;
end;

ListComb:=function(m,n)
    local seq,test;
    seq:=InitComb(m,n);
    test:=true;
    while(test) do
        Print(seq,"\n");
        if(seq[n+1]=seq[n+2]) then
            test:=false;
            continue;
        fi;
        seq:=NextCombination(m,n,seq);
    od;
end;

FisherYates:=function(seq)
    local i,j,l,t;
    #  Fisher-Yates shuffle:
    #  generate a random permutation of array in place
    l:=Length(seq);
    for i in [l,l-1..1] do
        j:=Random(1,i);
        t:=seq[i];
        seq[i]:=seq[j];
        seq[j]:=t;
    od;
    return seq;
end;
\end{verbatim}


\begin{thebibliography}{10}
    \bibitem{ATLAS} Conway, John Horton; Parker, Richard A.; Norton, Simon P.; Curtis, R. T.; Wilson, Robert A. (1985) ATLAS of Finite Groups. Oxford University Press, ISBN 978-0-19-853199-9, MR827219.
    \bibitem{GAP} The GAP Group, GAP -- Groups, Algorithms, and Programming, Version 4.6.4; 2013. (http://www.gap-system.org)
    \bibitem{GROVE} Grove, Larry. \emph{Groups and Characters.} John Wiley \& Sons, Inc. 1997.
    \bibitem{WIKI} \emph{Mathieu Groups.} Wikiepdia. 2013.
    \bibitem{NACHMAN} Nachman, Benjamin. \emph{Generating Sequences of $PSL(2,p)$.} arXiv:1210.2073
    \bibitem{WHISTON} Whiston, Julius. \emph{Maximal Independent Generating Sets of the Symmetric Group.} Journal of Algebra. 2000.

\end{thebibliography}
\end{document}